\newcommand{\C}{{\mathbb C}}
\newcommand{\E}{{\mathbb E}}
\newtheorem{theorem}{Theorem}[section]
\newtheorem{lemma}[theorem]{Lemma}
\newtheorem{corollary}[theorem]{Corollary}
\newtheorem{proposition}[theorem]{Proposition}
\newtheorem{definition}[theorem]{Definition}
\def\cal{\mathcal}
\newcommand{\calf}[0]{{\cal F}}
\newcommand{\call}[0]{{\cal L}}
\newcommand{\calg}[0]{{\cal G}}
\newcommand{\cale}[0]{{\cal E}}
\newcommand{\calk}[0]{{\cal K}}
\newcommand{\calo}[0]{{\cal O}}
\newcommand{\calp}[0]{{\cal P}}
\newcommand{\caln}[0]{{\cal N}}
\begin{document}               

\title[Equivariant $KK$-theory]{Equivariant $KK$-theory of $r$-discrete groupoids and inverse semigroups }
\author[B. Burgstaller]{Bernhard Burgstaller}
\address{Doppler Institute for mathematical physics,
Trojanova 13, 12000 Praha, Czech Republic}
\email{bernhardburgstaller@yahoo.de}
\thanks{This work was supported by Czech MEYS Grant LC06002.}
\subjclass{19K35, 20M18, 46L55}
\keywords{Baum--Connes map, inverse semigroups, groupoids, equivariant $KK$-theory}

\begin{abstract}
For an $r$-discrete Hausdorff groupoid $\calg$ and an inverse
semigroup $S$ of slices of $\calg$ there is an isomorphism between
$\calg$-equivariant $KK$-theory and compatible $S$-equivariant $KK$-theory.
We use it to define descent homomorphisms for $S$, and indicate a
Baum--Connes map for inverse semigroups.
Also findings by Khoshkam and Skandalis for crossed products by inverse semigroups
are reflected in $KK$-theory.
\end{abstract}

\maketitle

\section{Introduction}


Let $\calg$ be an $r$-discrete Hausdorff groupoid and $S$ a full system of slices of $\calg$, that is,
$S$ is an inverse semigroup.
In such a setting one seems to start with a groupoid, but note that on the other hand
an arbitrary inverse semigroup $S$ can be embedded as a full system of slices
in an $r$-dicrete groupoid $\calg_S$, which however need not be Hausdorff,
by a construction by Paterson \cite{paterson1999}.
In \cite{0992.46051}, Quigg and Sieben showed that for a $C^*$-algebra $A$
there is a one-to-one correspondence between a $\calg$-action on $A$
and an $S$-action on $A$ (in Sieben's sense \cite{sieben1997}).
Moreover, they found an isomorphism
between the corresponding groupoid crossed product $A \rtimes \calg$
and the corresponding inverse semigroup crossed product $A \widehat \rtimes S$
(in Sieben's sense \cite{sieben1997}).
Khoshkam and Skandalis \cite{1061.46047} introduced
another crossed product $A \rtimes S$ for $S$
which usually significantly differs from Sieben's crossed product,
but have shown in \cite{1061.46047} that $(A \rtimes E) \widehat{\rtimes} S$
and $A \rtimes S$ are isomorphic, where $E$ denotes the set of idempotent elements of $S$.

In this note we aim to extend these results to a $KK$-theoretical level.
We use semimultiplicative set equivariant $KK$-theory
$KK^S$ introduced in \cite{burgiSemimultiKK}, but also slightly adapt it such that the underlying $C_0(X)$-structure
is compatible with the module multiplication and denote this so-called compatible equivariant $KK$-theory by $\widehat{KK^S}$.

We then use Quigg and Sieben's findings for an isomorphism $\rho$
between $\calg$-equivariant $KK$-theory $KK^\calg(A,B)$ and compatible $S$-equivariant
$KK$-theory $\widehat{KK^S}(A,B)$,
see Theorem
\ref{theoremKKQuiggSieben}.
Khoshkam and Skandalis' findings will be reflected by a
homomorphism $\epsilon:\widehat{KK^S} (A,B) \longrightarrow \widehat{KK^S}(A \rtimes E, B \rtimes
E)$ which we call an expansion homomorphism and which is
defined like a descent homomorphism, see Theorem
\ref{theoremExpansionHomomorphismHat}.
We use $\rho$ and $\epsilon$
to define and intertwine, respectively, three types of descent homomorphisms for $\widehat{KK^S}$, one,
$\widehat{j^S}$, of Sieben's
crossed product-type, and two, $j^S$ and $j^S_r$, of
Khoshkam and Skandalis' crossed product-type
by pulling back corresponding descent homomorphisms from groupoid equivariant $KK$-theory to inverse
semigroup equivariant $KK$-theory. 
Both the isomorphism $\rho$ and the expansion homomorphism
$\epsilon$ respect functoriality and the Kasparov product. The isomorphism $\rho$ intertwines the descent homomorphisms $\widehat{j^S}$
and $j^\calg$, and $\epsilon$ the descent homomorphisms
$\widehat{j^S}$ and $j^S$.

%
%

An application of our translation, Theorem
\ref{theoremKKQuiggSieben}, is that the Baum--Connes maps \cite{baumconneshigson1994}
associated to $\calg$ \cite{0932.19005}
readily yield Baum--Connes maps associated to $S$,
see Corollaries \ref{corollaryBCmapSieben} and \ref{corollaryBCmapKhoshkamSkandalis}.
Many $C^*$-algebras like the Cuntz--Krieger algebras
\cite{cuntzkrieger}
are obviously generated by inverse semigroups and are potentially inverse semigroup crossed products,
but are less obvious groupoid crossed products. Also, inverse semigroups and their crossed products seem simpler at first glance,
so an inverse semigroup Baum--Connes theory may potentially be useful.

The overview of this note is as follows.
In section \ref{SectionPreliminaries} we recall some facts about $r$-discrete groupoids,
inverse semigroups, $C_0(X)$-algebras and their bundle picture, and the corresponding $KK$-theories.
In Section \ref{SectionCompatibleKK} we define compatible $S$-equivariant $KK$-theory,
the $S$-equivariant $KK$-theory we mainly use in this note.
In Section \ref{SectionCrossed} we very briefly recall some facts about the various
crossed products existing for inverse semigroups.
In Section \ref{sectionIso} we show that $\rho$ is an isomorphism,
and in Section \ref{sectionExpansion} we define and consider the expansion
homomorphism $\epsilon$.

\section{Preliminaries}    \label{SectionPreliminaries}

Throughout, $\calg$ denotes a locally compact $r$-discrete
Hausdorff groupoid endowed with the
Haar system of counting measures and $X$ its base space.
Since slices play a crucial role here, we recall the definition.

\begin{definition}[Renault \cite{renault1980}]   \label{definitionSlice}
{\rm
A {\em slice} of a locally compact Hausdorff groupoid $\calg$ is an open subset $\calo$ of $\calg$ such that the range and the source maps $r$ and $s$, respectively,
when restricted to $\calo$, become homeomorphisms, i.e. $s|_\calo:\calo \longrightarrow s(\calo)$ and $r|_\calo:\calo \longrightarrow r(\calo)$ are homeomorphisms.
$\calg$ is called {\em $r$-discrete} if the set of all slices of $\calg$ is a basis for the topology of $\calg$.
The set of all slices of $\calg$ forms an inverse semigroup under the set operations
$\calo \cdot \calp=\{gh \in \calg|\, g\in \calo, \, h \in \calp\}$ and
$\calo^* =\{g^{-1} \in \calg|\, g \in \calo\}$, where $\calo$ and $\calp$ denote two slices.
An inverse semigroup of slices of $\calg$ is called {\em full} if it generates the topology of $\calg$.
}
\end{definition}

Throughout, $S$ denotes an inverse semigroup and $E$ its set of projections.
We assume that $S$ is upward-directed (because this is required in
Quigg and Sieben \cite{0992.46051},
and this note builds on \cite{0992.46051}, but we do not need this property explicitly in this note).
In other words, for every $e_1,e_2 \in E$ there exists $e_3 \in E$ such that
$e_1,e_2 \le e_3$.
%
The following definition sums up our setting.

\begin{definition}
{\rm
A {\em Cantor inverse semigroup $S$ embedded in $\calg$} is an
upward-directed inverse semigroup $S$ which is a full system of slices of an $r$-discrete Hausdorff
groupoid $\calg$ such that every projection $e \in E$ is a {\em clopen} subset of the base space $X$ of $\calg$.
Multiplication and inversion in $S$ is understood to come from the corresponding
operations defined for slices (see Definition \ref{definitionSlice}).
}
\end{definition}

Throughout, if nothing else is said, $S$ denotes a Cantor inverse semigroup embedded in $\calg$.
Note that by the clopen-condition $X$ is a totally disconnected space.
Given a clopen subset $Y$ of $X$,
$1_Y \in C_0(X)$ denotes
the continuous characteristic function of $Y$ on $X$.
It is clear that
$C_0(X)$ is the norm closure of the linear span of the family of characteristic functions $(1_e)_{e \in E}$.
If $s \in S$ and $x \in X$ then we write $s x$ for the unique single element $g \in s \subseteq \calg$ such that $s(g) = x$.
Analogously we define $x s$.

We use inverse semigroup equivariant $KK$-theory $KK^S$ as introduced
in \cite{burgiSemimultiKK} by regarding $S$ as a semimultiplicative set.
The definition of $KK^S$ is also summarized in Section 3 of \cite{burgiDescent}, and we prefer using it as our reference.

The evaluation of an $S$-action $\alpha$ on an {$S$-Hilbert $C^*$-algebra} $A$ (\cite[Def. 3.1]{burgiDescent}) is usually denoted by $s(a)$ rather than
$\alpha_s(a)$ ($s \in S$ and $a \in A$). The restriction to $E$ is an action $\alpha|_E : E \longrightarrow Z(\call(A))$ into
the center of the multiplier algebra of $A$, see last section of \cite{burgiSemimultiKK} or \cite[Lemma 5.10]{burgiDescent}.
Consequently, we have the important relation $e(a) b = a e(b) = e(ab)$ for $e \in E$ and $a,b \in A$,
see \cite[Lemma 5.8.(iii)]{burgiDescent}.

The $S$-action on a $S$-Hilbert module $\cale$ (\cite[Def. 3.3]{burgiDescent}) is usually denoted by $U$.
Note that $U_s^* = U_{s^*}$ by \cite[Cor. 4.6]{burgiDescent}. We often write $s(T)$ for $U_s T U_s^*$ when $T \in \call(\cale)$.  

\begin{definition}
{\rm
An $S$-equivariant representation $\pi: A \longrightarrow \call(\cale)$ (\cite[Def. 3.4]{burgiDescent}) is called {\em compatible} 
if $\pi(e(a))= e(\pi(a))$ for all $e \in E$ and $a \in A$.
}
\end{definition}

It is not difficult to check that an $S$-equivariant representation $\pi$ is compatible if and only if $\pi(s(a)) = U_s \pi(a) U_s^*$ for all $s \in S$ and $a \in A$.
Indeed, if $\pi$ is compatible then $\pi(s(a)) = \pi(s s^* s(a)) =
\pi(s(a)) U_s U_s^* = U_s \pi(a) U_s^*$ by \cite[Def. 3.4]{burgiDescent}.

Our basic reference for $C_0(X)$-Banach spaces, -algebras and
-modules for a topological space $X$ is Paravicini's thesis
\cite{ParaviciniThesis}, and also Le Gall \cite{legall1999} who is
specific about $C^*$-algebras and Hilbert modules. Recall that
Hilbert modules inherit their $C_0(X)$-structure from the
$C^*$-algebra \cite{legall1999}. A $C_0(X)$-Banach space $\cale$
is called {\em locally $C_0(X)$-convex} \cite[Def.
4.4.1]{ParaviciniThesis} if $\|\xi\| = \sup_{x \in X} \|\xi_x\|$
for all $\xi \in \cale$. There is an equivalence of
categories between the category of $C_0(X)$-convex $C_0(X)$-Banach spaces together with bounded $C_0(X)$-linear maps as morphisms and
the category of upper semi-continuous fields of Banach spaces over $X$ together with bounded continuous fields of linear maps
as morphisms, see
\cite{ParaviciniThesis} (see Definition 4.2.8, Section 4.4 and
Appendix A.2.2 in \cite{ParaviciniThesis}). Since
$C_0(X)$-$C^*$-algebras and Hilbert modules over
$C_0(X)$-$C^*$-algebras automatically satisfy $C_0(X)$-convexity,
the last equivalence applies to them.
%
In this note we will mainly work with the equivalent
field picture without much comment.

Let $f:Y \longrightarrow X$ be a continuous function and $\cale$ an upper semi-continuous field of Banach spaces over $X$.
By \cite[Definition 3.3.1]{ParaviciniThesis}, the
{\em pullback} $f^* \cale$ is the set of all (set-theoretical) sections in $\prod_{y \in Y} \cale_{f(y)}$
which are in every point $y_0 \in Y$ locally approximable (see \cite[Definition 3.1.13]{ParaviciniThesis})
by a simple function of the set
$\cale \circ f = \{\xi \circ f| \, \xi \in \cale\}$.
The set of simple functions is called a {\em total set} of $f^* \cale$, see \cite[Proposition 3.1.26]{ParaviciniThesis}.



In the next lemma we recall that any inverse semigroup can be realized as a system of slices of a groupoid
by using Paterson's construction in \cite{paterson1999}.

\begin{lemma}    \label{lemmaPatersonsGroupoidCX}
Let $S$ be an upward-directed inverse semigroup and suppose that Paterson's groupoid $\calg_S$ associated to $S$ is Hausdorff. Then
$S$ is a Cantor inverse semigroup embedded in $\calg_S$,
and $\varphi:C^*(E) \longrightarrow C_0(X)$, where $\varphi(e)=1_e$
for every $e \in E$, is an isomorphism.
(Here, $C^*(E)$ denotes the universal abelian
$C^*$-algebra generated by the projection set $E$.)
Moreover, every full $S$-Hilbert $C^*$-algebra $A$ (i.e. $C_0(X)A=A$) is automatically a compatible $S$-Hilbert $C^*$-algebra.
\end{lemma}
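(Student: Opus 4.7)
The plan is to read off the first claim directly from the structural properties of Paterson's groupoid $\calg_S$, and to derive the second by a density and continuity argument based on the identity $e(a)b = ae(b) = e(ab)$ recalled in the preliminaries.

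For the first assertion I would recall that in Paterson's construction the unit space $X = \calg_S^{(0)}$ is realised as a subspace of $\{0,1\}^E$ (with the product topology), namely the space of non-zero semicharacters of $E$. For each $e \in E$ the basic set $\hat e = \{\chi \in X : \chi(e) = 1\}$ is simultaneously open and closed, because $\{0,1\}$ is discrete; these sets form a basis for the topology of $X$, so $X$ is totally disconnected and every projection of $S$ is a clopen subset of $X$. This shows that $S$ is a Cantor inverse semigroup embedded in $\calg_S$. For the isomorphism $\varphi$, the preliminaries already record that $C_0(X)$ is the norm-closed linear span of $(1_e)_{e \in E}$, so the $*$-homomorphism $C^*(E) \to C_0(X)$ sending $e \mapsto 1_e$ is surjective; injectivity follows from the universal property of $C^*(E)$ combined with the fact that the $1_e \in C_0(X)$ realise a universal commuting family of projections, since the semicharacters of $E$ separate its elements.

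For the second assertion let $A$ be a full $S$-Hilbert $C^*$-algebra. Composing $\varphi^{-1}$ with the canonical homomorphism $C^*(E) \to Z(\call(A))$ coming from $\alpha|_E$ yields a central nondegenerate action of $C_0(X)$ on $A$ satisfying $1_e \cdot a = e(a)$. Using $C_0(X)A = A$ together with an approximate-unit (or Cohen factorisation) argument, any $a \in A$ is a norm-limit of finite sums $\sum_i 1_{e_i} b_i = \sum_i e_i(b_i)$ with $e_i \in E$ and $b_i \in A$. On elements of the form $e(b)$ the compatibility conditions required of a compatible $S$-Hilbert $C^*$-algebra are immediate from the central identity $e(a)b = ae(b) = e(ab)$ and from the compatibility of the $E$-action with the semilattice structure on $E$, and they extend to all of $A$ by continuity.

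The one point that needs a little care is the identification of the Gelfand spectrum of $C^*(E)$ with Paterson's $X$: one must check that the semicharacters of $E$ furnished by the Hausdorff groupoid $\calg_S$ really exhaust the characters of $C^*(E)$ and separate the idempotents, so that $\varphi$ is a well-defined bijection onto $C_0(X)$. This is precisely where the standing Hausdorff hypothesis on $\calg_S$ enters; once the identification is made, the rest of the proof is routine density and continuity.
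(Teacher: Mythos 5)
Your proposal is correct and follows essentially the same route as the paper, which simply cites Paterson's construction for the first claim and, for the second, extends the $E$-action by universality to a $*$-homomorphism $C^*(E)\to Z(\call(A))$ so that $A$ becomes a $C_0(X)$-algebra; you are just filling in the details of both steps. One small correction: the Hausdorff hypothesis on $\calg_S$ is not what makes the identification of the spectrum of $C^*(E)$ with $X$ work (the unit space of semicharacters in $\{0,1\}^E$ is always Hausdorff and is the spectrum regardless) --- it is needed so that $\calg_S$ qualifies as the Hausdorff $r$-discrete groupoid demanded by the definition of a Cantor inverse semigroup.
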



\begin{proof}
See Paterson \cite{paterson1999} (or
\cite[p. 64]{khoshkamskandalis2002}) for more on this.
For the last claim note that by universality of $C^*(E)$ the
$E$-action $\alpha|_E: E \longrightarrow Z(\call(A))$ on a $S$-Hilbert
$C^*$-algebra $A$ extends automatically to a $*$-homomorphism $\tilde \alpha: C^*(E) \longrightarrow
Z(\call(A))$. So automatically, $A$ becomes a $C_0(X)$-algebra.
\end{proof}

\section{Compatible $S$-equivariant $KK$-theory}

\label{SectionCompatibleKK}
%

$S$-equivariant $KK$-theory and $\calg$-equivariant $KK$-theory are not compatible.
In order to get an isomorphic theory we need to slightly adapt $S$-equivariant $KK$-theory.
Assume that all $S$-$C^*$-algebras are also $C_0(X)$-algebras. We need to claim that the $C_0(X)$-action,
when restricted to the characteristic functions of elements of $E$, coincides with the $E$-action.

\begin{definition}  \label{DefCompatibleHilbertAlgebra}
{\rm
Call an $S$-Hilbert $C^*$-algebra $A$ {\em compatible} if $1_e \cdot a = e(a)$ for all $e \in E$ and $a \in A$.  
}
\end{definition}

\begin{definition}  \label{definitionCompatibleHilbert}
{\rm
Call an $S$-Hilbert module $\cale$ over a compatible $S$-Hilbert $C^*$-algebra $B$ {\em compatible}
if $U_e(\xi) b = \xi e(b)$ for all $e \in E, \xi \in \cale$ and $b \in B$. 
}
\end{definition}

Similarly, an $S$-Hilbert $(A,B)$-bimodule $\cale$ is called compatible if also the $A$-module is compatible in the sense that
$e(a) \xi = a U_e(\xi)$. Of course, this is equivalent to saying that the $S$-equivariant map $\pi:A \longrightarrow \call(\cale)$
of $(\cale,U,\pi)$
is compatible.
%
The compatibility of Hilbert modules implies compatibility with the $C_0(X)$-structure.
We have $\xi (1_e \cdot b) = (1_e \cdot \xi) b$ and $(1_e \cdot a) \xi = a (1_e \cdot \xi)$ for all $e \in E$. By the fact that $C_0(X)$ is generated by
these $1_e$'s, this shows that the $C_0(X)$-structure is compatible with the module multiplications, exactly as we have
it in groupiod-equivariant Hilbert modules in the sense of Le Gall \cite{legall1999}.

\begin{definition}   \label{definitionInverseSemigroupSlicesKK}
{\rm For compatible $S$-Hilbert $C^*$-algebras $A$ and
$B$, we write $\widehat{\E^S}(A,B)$ for the collection of all cycles
$(\cale,T)$ in $\E^G(A,B)$ (\cite[Def. 3.6]{burgiDescent}) where $\cale$ is
a compatible $S$-Hilbert $(A,B)$-bimodule.
Compatible $S$-equivariant $KK$-theory $\widehat{KK^S}(A,B)$ is defined to be $\widehat{\E^S}(A,B)$
divided by homotopy induced by $\widehat{\E^S}(A,B[0,1])$. }
\end{definition}

The hat over $KK^S$ should indicate that the underlying Hilbert modules are compatible.
In the remainder of this section assume that all modules and bimodules are compatible.
Given $S$-Hilbert modules $\cale_1$ and $\cale_2$ we define the internal tensor product
$\cale_1 \otimes_B \cale_2$ as in \cite{burgiSemimultiKK}.
Since $B$ acts compatibly on $\cale_2$, the internal tensor product
is automatically $C_0(X)$-balanced.
The exterior tensor product $\cale_1 \otimes \cale_2$ we define $C_0(X)$-balanced
as in Le Gall \cite[Definition 4.2]{legall1999}).
That the diagonal $S$-action is indeed well defined
can be seen as follows.
One has
$$U_s(1_e \xi) = U_s U_e (\xi) = U_{s e s^*}
U_s (\xi) = 1_{s e s^*} U_s(\xi)$$
for $s \in S, e \in E$ and $\xi \in
\cale$, and so $U^{(1)}(1_e \xi) \otimes U^{(2)}(\eta)= U^{(1)}(\xi) \otimes U^{(2)}(1_e \eta)$
in $\cale_1 \otimes \cale_2$ for $\eta \in \cale_2$.
Both the exterior and internal tensor product are compatible Hilbert modules.

For a separable compatible $S$-Hilbert $C^*$-algebra
$D$ we define a map
$\widehat{\tau_D^{S}}: \widehat{KK^S}(A,B)
\longrightarrow \widehat{KK^S}(A  \otimes D,B
 \otimes D)$
by $\widehat{\tau_D^{S}}
[\cale,T] = [\cale \otimes D, T \otimes 1]$
for $(\cale,T) \in \widehat{\E^S}(A,B)$.
This map is completely analog to the corresponding well known map $\tau^\calg_D$
in groupoid equivariant $KK$-theory.

\begin{lemma}
There exists a Kasparov product for
$\widehat{KK^S}$ which is defined on the level on cycles analogously like the Kasparov product for $KK^S$ in \cite{burgiSemimultiKK}.
There exists a Kasparov cup-cap product for $\widehat{KK^S}$
which is defined as usual by using $\widehat{\tau^S}$.
\end{lemma}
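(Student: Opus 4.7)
The plan is to reuse the Kasparov product construction for $KK^S$ from \cite{burgiSemimultiKK} verbatim, and to verify that when one starts with compatible input cycles, every intermediate object remains compatible, so that the output lies in $\widehat{\E^S}$ rather than merely in $\E^S$. The only genuinely new content is a compatibility check, and most of the work has in fact already been done in the preceding paragraphs of this section.

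First I would take compatible cycles $(\cale_1,T_1) \in \widehat{\E^S}(A,D)$ and $(\cale_2,T_2) \in \widehat{\E^S}(D,B)$ and form the internal tensor product $\cale := \cale_1 \otimes_D \cale_2$. By the discussion immediately before the lemma, $\cale$ is automatically $C_0(X)$-balanced and the diagonal $S$-action is well-defined and compatible, so $\cale$ is a compatible $S$-Hilbert $(A,B)$-bimodule. Next I would choose a $T_1 \# T_2$-connection $T$ on $\cale$ by the same averaging/perturbation recipe as in \cite{burgiSemimultiKK}. The point to verify here is that compatibility, i.e.\ the identity $U_e = 1_e \cdot$ on the module and the analogous identity on the left action, is preserved under these operations. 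Since both $U_e$ and $1_e \cdot$ are central in $\call(\cale)$ on a $C_0(X)$-convex Hilbert module, the strict-topology averaging constructions used to build $T$ commute with them, and the left/right $S$-equivariance is inherited from $T_1,T_2$ as in the non-compatible setting. Hence $(\cale,T) \in \widehat{\E^S}(A,B)$.

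To pass from cycles to classes I would repeat the above with $B$ replaced by $B[0,1]$, using that $B[0,1]$ is compatible whenever $B$ is, so that compatible homotopies of inputs produce a compatible homotopy of outputs. The resulting product is well-defined on $\widehat{KK^S}(A,D) \times \widehat{KK^S}(D,B)$, and the associativity, bilinearity and unit properties follow from the corresponding properties of the $KK^S$-product since the compatible cycles form a full subcategory closed under the relevant operations. The cup-cap product $\widehat{\otimes_D^S}$ is then defined in the usual way by combining the Kasparov product with $\widehat{\tau^S_D}$; the map $\widehat{\tau^S_D}$ is well defined because, as observed earlier in the section, the exterior ($C_0(X)$-balanced) tensor product of compatible Hilbert modules is again compatible, so $\cale \otimes D \in \widehat{\E^S}(A \otimes D, B \otimes D)$.

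The main obstacle is the compatibility check in the construction of the connection $T$, but it reduces to the observation that the compatible $E$-action and the $C_0(X)$-action are central and agree on generators of $C_0(X)$; everything else is bookkeeping along the pattern of \cite{burgiSemimultiKK}. I would therefore only spell out this centrality argument and refer to \cite{burgiSemimultiKK} for the remaining analytic steps of the Kasparov product construction.
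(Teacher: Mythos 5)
Your proposal is correct and takes essentially the same route as the paper: the only substantive point is that the internal tensor product of compatible modules is again compatible, so the Kasparov product of $KK^S$ from \cite{burgiSemimultiKK} restricts to $\widehat{\E^S}$, and the cup-cap product is then built from it using $\widehat{\tau^S}$. Two small remarks. First, compatibility of a cycle is a condition on the bimodule and the actions only, not on the operator, so no separate compatibility check on the connection $T$ is needed; once $\cale_1 \otimes_D \cale_2$ is known to be a compatible $(A,B)$-bimodule, any admissible connection yields a cycle in $\widehat{\E^S}(A,B)$, and your ``main obstacle'' is in fact vacuous. Second, associativity of the cup-cap product is not literally inherited from $KK^S$ by restriction to a subcategory, because the exterior tensor product in the compatible theory is $C_0(X)$-balanced and hence a different operation from the one used in $KK^S$; one has to rerun the associativity argument with $\widehat{\tau^S}$ in place of $\tau^S$ and with the unit $[C_0(X),0] \in KK^S(C_0(X),C_0(X))$ in place of $[\C,0]$, exactly as the paper indicates (the paper also notes the alternative of transporting everything through the isomorphism $\rho$ with $KK^\calg$, which you do not use).
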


\begin{proof}
The Hilbert module part of a cycle of a Kasparov product is $\cale \otimes_{B} \calf$, which is compatible
if $\cale$ and $\calf$ are compatible, so the Kasparov product from \cite{burgiSemimultiKK} is also well defined
for $\widehat{KK^S}$.
If one wants check associativity of the cup-cap product for $\widehat{KK^S}$
similarly as in the last Section 7 of
\cite{burgiSemimultiKK}, say, then one has to use $\widehat{\tau^S}$ rather
than $\tau^S$ and the one element $1 = [{C_0(X)},0] \in
KK^S(C_0(X),C_0(X))$ instead of $[\C,0] \in
KK^S(\C,\C)$.
Alternatively, and simpler, than going through these proofs, is to use the isomorphism between $KK^S$ and $KK^\calg$ of Theorem
\ref{theoremKKQuiggSieben} together with the evident associativity of the groupoid equivariant Kasparov product.
\end{proof}

\section{Crossed products}

\label{SectionCrossed}

For crossed products of inverse semigroups
we will use constructions from three sources: Sieben's full
crossed product \cite{sieben1997}, Khoshkam and Skandalis'
reduced and full crossed products \cite{1061.46047}, and the
author's full crossed product \cite{burgiDescent}.
In any case, an $S$-action is an inverse semigroup homomorphism
from $S$ to some objects related to the $C^*$-algebra.
Khoshkam and Skandalis \cite{1061.46047} are most general and allow morphisms from $S$
into the isomorphisms between quotients of ideals, Sieben \cite{sieben1997} into
partial automorphisms, and the author \cite{burgiDescent} into endomorphisms which are
also partial isometries (see Section 4 of \cite{burgiDescent}), the smallest class.

For a $C_0(X)$-algebra $A$ we write $A_e := 1_e \cdot A$ for $e \in E$.
$A$ is a $C^*$-dynamical system $(A,\beta,S)$ in Sieben's sense if $\beta$ is an $S$-action by partial automorphisms 
compatible with the $C_0(X)$-structure, that is, the domain of $\beta_e$ is the ideal
$A_e$ of $A$ for every $e \in E$.
Here is the key why we require $e \in E$ to be a clopen set in $X$; we need to have that $1_e$ is a continuous
function in $C_0(X)$.


\begin{lemma}
Let $A$ be a $C_0(X)$-algebra and $S$ a Cantor inverse semigroup. Then $(A,\alpha,S)$ is a compatible $S$-Hilbert $C^*$-algebra if and only if
$A$ is a $C^*$-dynamical system $(A,\beta,S)$ in Sieben's sense. 
\end{lemma}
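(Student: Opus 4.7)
The plan is to set up a bijective correspondence between the two types of actions by defining maps in both directions and verifying they preserve all the required structure. The Cantor property is used crucially to ensure that for every $e \in E$, the characteristic function $1_e$ lies in $C_0(X)$, so that the ideal $A_e = 1_e \cdot A$ is a well-defined closed two-sided ideal of $A$ and the domain/range conditions of Sieben's dynamical systems make sense inside a $C_0(X)$-algebra. In both directions the strategy is to reduce the inverse-semigroup action axiom to a computation with the $C_0(X)$-structure, exploiting that $A$ is the norm closure of $\bigcup_{e \in E} A_e$.

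For the forward direction, given a compatible $S$-Hilbert $C^*$-algebra $(A,\alpha,S)$, I would define $\beta_s := \alpha_s|_{A_{s^*s}}$. Compatibility says $\alpha_{s^*s}(a) = 1_{s^*s} \cdot a$, so the image of $\alpha_{s^*s}$ is exactly $A_{s^*s}$; combined with $\alpha_s = \alpha_s \alpha_{s^*s} = \alpha_{ss^*}\alpha_s$ (since $\alpha_s$ is a partial isometry with source projection $\alpha_{s^*s}$ and range projection $\alpha_{ss^*}$, see \cite[Sect.~4]{burgiDescent}), this yields $\alpha_s(A) = A_{ss^*}$ and $\alpha_s \equiv 0$ on $(1 - 1_{s^*s}) A$. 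Thus $\beta_s : A_{s^*s} \to A_{ss^*}$ is a $*$-isomorphism with inverse $\beta_{s^*}$, and the multiplicativity $\alpha_{st} = \alpha_s \alpha_t$ restricts to Sieben's axiom for $\beta$.

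For the backward direction, given a Sieben dynamical system $(A,\beta,S)$, I would define $\alpha_s(a) := \beta_s(1_{s^*s} \cdot a)$, which makes sense on all of $A$ because $1_{s^*s} a \in A_{s^*s} = \mathrm{dom}(\beta_s)$. I verify that each $\alpha_s$ is an endomorphism that is a partial isometry in the sense of \cite[Sect.~4]{burgiDescent}, with source and range projections $1_{s^*s}$ and $1_{ss^*}$. Compatibility is automatic: for $e \in E$ the map $\beta_e$ is an idempotent partial automorphism of $A_e$, hence the identity on $A_e$, so $\alpha_e(a) = \beta_e(1_e a) = 1_e a$. Finally, one checks the two constructions are mutually inverse.

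The main obstacle will be verifying $\alpha_{st} = \alpha_s \alpha_t$ in the backward direction, since Sieben's axiom only gives $\beta_{st}$ as an extension of $\beta_s \circ \beta_t$ on a common domain. Concretely, one must show
\[
\beta_s\bigl(1_{s^*s} \cdot \beta_t(1_{t^*t} \cdot a)\bigr) = \beta_{st}(1_{(st)^*st} \cdot a)
\]
for all $a \in A$. The key point is that $\beta_t(1_{t^*t} a) \in A_{tt^*}$, so multiplying by $1_{s^*s}$ lands in $A_{s^*s \wedge tt^*}$; pulling this back through $\beta_t^{-1}$ one gets $\beta_t^{-1}(A_{s^*s \wedge tt^*}) = A_{t^* s^* s t} = A_{(st)^*st}$, on which $\beta_s \beta_t$ agrees with $\beta_{st}$ by Sieben's axiom. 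Absorbing the idempotents $1_e$ carefully at each step is the only non-formal part of the argument.
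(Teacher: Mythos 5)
Your proposal is correct and follows essentially the same route as the paper: the paper's proof consists precisely of the two assignments $\beta_s := \alpha_s|_{A_{s^*s}}$ and $\alpha_s(a) := \beta_s(1_{s^*s}\cdot a)$, with the verifications left implicit. Your additional checks (that the partial-isometry identities $\alpha_s = \alpha_s\alpha_{s^*s} = \alpha_{ss^*}\alpha_s$ give Sieben's domain/range conditions, and the careful absorption of idempotents in the multiplicativity computation $\alpha_s\alpha_t = \alpha_{st}$) are exactly the details the paper omits, and they go through as you describe.
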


\begin{proof}
If $\alpha$ is a compatible $S$-action then $\beta=(\beta_s:A_{s^*s} \rightarrow A_{s s^*})_{s \in S}$,
$\beta_s:= \alpha_s|_{A_{s^*s}}$, defines a partial action in Sieben's sense on $A$ which is compatible
with the $C_0(X)$-structure. In the other direction, given an action $\beta$,
$\alpha_s(a):= \beta_s(1_{s^* s} \cdot a)$ ($s \in S$ and $a \in A$) defines a compatible
$S$-action on $A$.
\end{proof}


The author's full
crossed product from \cite{burgiDescent} coincides with the one of Khoshkam--Skandalis by
\cite[Corollary 8.4]{burgiDescent}, and is defined as follows.
%
%
%
%
The crossed product $A \rtimes S$ is the enveloping $C^*$-algebra of
the Banach $*$-algebra $\ell^1(S,A)$. The latter is the $\ell^1$-norm
closure of the set of formal sums $\sum_{s \in S} a_s \rtimes s$
with finite support,
where  $a_s \in A_{s s^*}$ for all $s \in S$, endowed with the
natural convolution product, involution and $\ell^1$-norm (see
\cite[Lemma 5.13]{burgiDescent}), that is, $(a_s \rtimes s)^* = s^*(a_s^*) \rtimes s^*$
and $(a_s \rtimes s) (b_t \rtimes t) = a_s s(b_t) \rtimes st$ for $s,t \in S, a_s \in A_{s s^*}$ and $b_t
\in A_{t t^*}$.
The crossed product in
Sieben's sense, denoted by $A \widehat \rtimes S$, is the
enveloping $C^*$-algebra of $\ell^1(S,A)$ with respect to compatible
$S$-equivariant representations $(\pi, u,H)$ on Hilbert spaces $H$.

\section{The isomorphism in $KK$-theory}   \label{sectionIso}

Throughout this section all $C^*$-algebras are $C_0(X)$-algebras and all $S$-Hilbert $C^*$-algebras, modules, and $S$-equivariant
$KK$-theory are understood to be compatible.
The following result is the basis for this note.

\begin{theorem}  [Quigg and Sieben \cite{0992.46051}]    \label{theoremQuiggSieben}
There is an equivalence $\rho_{C^*}$ from the category
of $\calg$-$C^*$-algebras to the category of compatible $S$-Hilbert $C^*$-algebras.
It is given by $\rho_{C^*}(A,\alpha,\calg) = (A,\beta,S)$, where the
$\calg$-action $\alpha$ and $S$-action $\beta$ on $A$ determine each
other by
\begin{eqnarray}
(\beta_s(a))_{x} &=& 1_{\{x \in s s^*\}} \alpha_{x s} (a_{ s^* x s}),   \label{cstaraction}\\
\alpha_g(a_{s(g)})  &=& (\beta_s(a))_{r(g)}   \nonumber
\end{eqnarray}
for every $a \in A, x \in X$ and $g \in s \in S$.
Moreover, there is an isomorphism $\psi:A \widehat{\rtimes}_\beta
S \longrightarrow A \rtimes_\alpha \calg$ determined by
$(\psi(a \widehat \rtimes s))_g := 1_{\{ g \in s \}} a_{r(g)}$
for all $s \in S, a \in A_s$ and $g \in \calg$.
\end{theorem}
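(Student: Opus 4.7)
My plan is to work throughout in the upper semi-continuous bundle picture of $C_0(X)$-algebras recalled in Section~\ref{SectionPreliminaries}, in which a $\calg$-$C^*$-algebra $A$ is a field $(A_x)_{x\in X}$ together with continuously varying fibrewise $*$-isomorphisms $\alpha_g : A_{s(g)} \to A_{r(g)}$. In this language the two formulas in (\ref{cstaraction}) are manifestly mutually inverse on fibres, so the content of the theorem splits into three tasks: (i) show that the first formula produces a compatible $S$-Hilbert $C^*$-algebra out of $(A,\alpha)$; (ii) show conversely that a compatible $S$-action $\beta$ determines via the second formula a well defined continuous $\calg$-action; and (iii) lift this to an equivalence of categories and construct the crossed product isomorphism $\psi$.

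For (i), the map $x\mapsto xs$ is a homeomorphism from the clopen set $ss^*$ onto the slice $s$, so the right-hand side of the first formula is a continuous section supported on $ss^*$, and fibrewise $\beta_s$ is just the $*$-isomorphism $\alpha_{xs}:A_{s^*xs}\to A_x$. The inverse semigroup relations $\beta_s\beta_t=\beta_{st}$ and $\beta_{s^*}=\beta_s^{-1}$ reduce fibrewise to the groupoid composition law $\alpha_{xs}\alpha_{(s^*xs)t}=\alpha_{x(st)}$ and to $\alpha_g^{-1}=\alpha_{g^{-1}}$. Compatibility $\beta_e(a)=1_e\cdot a$ for $e\in E$ is immediate since $xe$ is the unit arrow at $x$ whenever $x\in e$, on which $\alpha$ acts as the identity.

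For (ii), given $g\in\calg$, fullness of $S$ lets me pick some $s\in S$ with $g\in s$ and set $\alpha_g(a_{s(g)}):=\beta_s(a)_{r(g)}$. The critical step is independence of the chosen slice: if $g\in s\cap s'$ with $s,s'\in S$, then fullness produces $s''\in S$ with $g\in s''\subseteq s\cap s'$, whence $s''=es=fs'$ for idempotents $e=s''s^*$ and $f=s''s'^*$, and the compatibility condition $\beta_e(a)=1_e\cdot a$ together with the semigroup identities $\beta_s\beta_e=\beta_{es}=\beta_{s''}=\beta_{fs'}=\beta_{s'}\beta_f$ forces $\beta_s(a)_{r(g)}=\beta_{s'}(a)_{r(g)}$. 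Continuity of $\alpha$ is a local statement in $g$, hence reduces to continuity of the $\beta_s$ as maps of sections. Functoriality comes for free: a $C_0(X)$-linear $*$-homomorphism intertwines the $\calg$-actions iff it does so fibrewise iff it intertwines the $\beta$-actions.

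For the crossed-product isomorphism I would first define $\psi$ on the dense $*$-subalgebra $\ell^1(S,A)$ by the displayed formula, verifying the $*$-algebra identities by direct comparison of the inverse semigroup convolution $(a_s\rtimes s)(b_t\rtimes t)=a_s\,s(b_t)\rtimes st$ with Renault's groupoid convolution applied to the indicator sections supported on $s$ and $t$ (which is supported on $st$, matching the formula). Extension to a $C^*$-isomorphism then follows most conceptually from matching universal properties: $A\widehat\rtimes_\beta S$ is universal for compatible covariant Hilbert-space pairs $(\pi,u)$, $A\rtimes_\alpha\calg$ is universal for covariant $\calg$-pairs $(\pi,U)$, and the object-level correspondence of (i)--(ii) applied to $\call(H)$ sets up a bijection between such pairs. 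The main technical obstacle is the slice-independence step in (ii); once that is pinned down, everything else reduces to fibrewise bookkeeping and invocation of universal properties.
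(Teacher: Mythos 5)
The paper itself does not prove this theorem: its ``proof'' is a one-line pointer to Theorems 5.3, 6.2 and 7.1 of Quigg--Sieben \cite{0992.46051}, so your sketch cannot be measured against an in-paper argument and has to stand on its own as a reconstruction of that reference. The object-level part (your (i) and (ii)) is essentially correct. The one caveat in (ii) is that your slice-independence step needs an element $s''\in S$ with $g\in s''\subseteq s\cap s'$, which requires reading ``$S$ generates the topology'' as ``$S$ is a basis'' (or at least that the clopen idempotents form a basis of $X$); granted that, the computation $s''=es=fs'$ with $e=s''s^{*}$, $f=s''s'^{*}$ together with $\beta_e=1_e\cdot(-)$ does force $(\beta_s(a))_{r(g)}=(\beta_{s'}(a))_{r(g)}$, and this is the standard argument.

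The genuine gap is in the crossed-product isomorphism $\psi$. First, you never address surjectivity: why is $\psi(\ell^1(S,A))$ dense in $A\rtimes_\alpha\calg$? This is precisely where fullness of $S$ earns its keep --- one must show that sections of $r^{*}A$ supported on slices belonging to $S$ span a dense subspace of $C_c(\calg,r^{*}A)$, which takes a partition-of-unity argument over the basis of slices and is not automatic. Second, your route to isometry for the universal norms, namely ``apply the object-level correspondence of (i)--(ii) to $\call(H)$'', does not parse: $\call(H)$ carries no $C_0(X)$-structure and is not a $\calg$-$C^{*}$-algebra, so the equivalence of categories you have built says nothing about it. What is actually required is a direct bijection between compatible covariant Hilbert-space pairs $(\pi,u)$ for $(A,\beta,S)$ and covariant representations of $(A,\alpha,\calg)$, together with the verification that $\psi$ intertwines the two integrated forms; this is the real content of Quigg--Sieben's Theorem 7.1 and is not a formal consequence of the object-level equivalence. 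Until both points are supplied, you only have a $*$-homomorphism defined on a dense subalgebra with no control of either enveloping norm.
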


\begin{proof}
The maps and the isomorphism can be found in
Theorems 5.3, 6.2 and 7.1 in \cite{0992.46051}.
\end{proof}

By the last theorem we no longer distinguish between $\calg$- and compatible $S$-Hilbert $C^*$-algebras
because it is understood that we have both the $\calg$- and the $S$-action present if one of these actions is defined.

\begin{proposition}   \label{theroemRhoHilbertmodule}
There is an equivalence $\rho_{\rm H}$ from the category of $\calg$-Hilbert bimodules to the category of compatible $S$-Hilbert bimodules.
It is given by $\rho_{\rm H}(\pi,\cale,V,\calg) = (\pi,\cale,U,S)$, where
the $\calg$-action $V$ and $S$-action $U$ on $\cale$
determine each other by
\begin{eqnarray}
\big(U_s(\xi)\big)_{x} &=& 1_{\{x \in s s^*\}} V_{x s} \big(\xi_{ s^* x s} \big),  \label{hilbertaction}\\
V_g(\xi_{s(g)})  &=& (U_s(\xi))_{r(g)}   \label{hilbertgroupoidaction}
\end{eqnarray}
for $\xi \in \cale, x \in X$ and $g \in s \in S$. Actually,
$\rho_{\rm H} \rho_{\rm H}^{-1}= 1$ and $\rho_{\rm H}^{-1} \rho_{\rm H}=1$ are the identic functors,
%
and $\rho_{\rm H}$ respects the internal and external tensor
products.

%
\end{proposition}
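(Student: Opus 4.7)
The plan is to mirror the Quigg--Sieben equivalence of Theorem \ref{theoremQuiggSieben} at the module level, exploiting the fact that the $\calg$-action $V$ and the $S$-action $U$ live on the \emph{same} underlying Hilbert bimodule $\cale$, viewed in the fiber picture as an upper semi-continuous field $(\cale_x)_{x \in X}$ of Hilbert $(A_x,B_x)$-bimodules. Since the coefficient-algebra equivalence $\rho_{C^*}$ is already strict, establishing that $\rho_{\rm H}$ is a strict equivalence reduces to checking that (\ref{hilbertaction}) and (\ref{hilbertgroupoidaction}) are mutually inverse assignments between fiberwise-continuous $\calg$-actions and compatible $S$-actions on $\cale$, that all the equivariance axioms transfer, and that morphisms and tensor products are respected.

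First I would verify that (\ref{hilbertaction}) actually produces a section $U_s(\xi) \in \cale$ for $\xi \in \cale$. The clopenness of $ss^* \subseteq X$ and the homeomorphism property of the range and source maps on the slice $s$ ensure that the maps $x \mapsto xs$ and $x \mapsto s^*xs$ are continuous on $ss^*$, so joint continuity of the groupoid action $V$ gives continuity of $x \mapsto V_{xs}(\xi_{s^*xs})$ on $ss^*$, and the indicator factor $1_{\{x \in ss^*\}}$ extends it by zero to a section on $X$. The round trips $V \mapsto U \mapsto V$ and $U \mapsto V \mapsto U$ then reduce to the tautologies $r(g)\cdot s = g$ and $s^* r(g)\, s = s(g)$ for every $g \in s \in S$, which are immediate from the slice property.

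Next I would translate the algebraic axioms. The identities $U_{st} = U_s U_t$ and $U_{s^*} = U_s^*$ follow fiberwise from the corresponding groupoid relations through (\ref{hilbertaction}), and the compatibility $U_e(\xi)b = \xi e(b)$ for $e \in E$ drops out because $e$ is clopen in the unit space: then $V_{xe} = \id_{\cale_x}$ for $x \in e$, so $(U_e \xi)_x = 1_{\{x \in e\}} \xi_x$, while the compatibility of $B$ gives $e(b)_x = 1_{\{x \in e\}} b_x$, making both sides of $U_e(\xi)b = \xi e(b)$ equal to $1_{\{x \in e\}} \xi_x b_x$ at every fiber. The $\calg$-equivariance of the inner product and of the left and right module multiplications translates to $S$-equivariance exactly as for the algebra action in Theorem \ref{theoremQuiggSieben}. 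Morphisms cost nothing extra: a bounded field of linear maps commutes with $V$ fiberwise if and only if it commutes with $U$ through (\ref{hilbertaction}).

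The delicate step, which I would treat last, is compatibility with tensor products. For the internal tensor product $\cale_1 \otimes_B \cale_2$ the fiber at $x$ is $(\cale_1)_x \otimes_{B_x} (\cale_2)_x$, which is where the groupoid-side $C_0(X)$-balancing matches the $S$-side balancing induced by compatibility of $B$; the diagonal $\calg$-action $V^{(1)}_g \otimes V^{(2)}_g$ transported by (\ref{hilbertaction}) then yields precisely $U^{(1)}_s \otimes U^{(2)}_s$, since the scalar $1_{\{x \in ss^*\}}$ and the point $s^*xs$ depend only on $x$ and are shared by both tensor factors. The external tensor product over $C_0(X)$ is analogous, with fiber $(\cale_1)_x \otimes (\cale_2)_x$ and no further balancing to inspect. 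Because $\rho_{\rm H}$ never touches the underlying field and only re-encodes the action, the equalities $\rho_{\rm H}\rho_{\rm H}^{-1} = 1$ and $\rho_{\rm H}^{-1}\rho_{\rm H} = 1$ are literal identities of functors.
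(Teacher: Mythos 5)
Your overall strategy coincides with the paper's: translate $V$ and $U$ into each other fiberwise via \eqref{hilbertaction} and \eqref{hilbertgroupoidaction}, use the homeomorphism property of the slice $s$ and the clopenness of $ss^*$ to handle well-definedness, note that the round trips are literal identities, and transfer the algebraic axioms pointwise. For the direction $V\mapsto U$ your argument is essentially the paper's, but the justification is phrased too loosely for the upper semi-continuous field setting: sections of $\cale$ are not continuous functions, so ``joint continuity of $V$'' is not an available tool. The correct (and intended) argument is that $\xi\circ s$ lies in the pullback $s^*\cale$, hence $V(\xi\circ s)\in r^*\cale$ by the very definition of $V$ as a continuous field of linear maps; since $U_s(\xi)\circ r=V(\xi\circ s)$ on the slice $s$ and $r|_s$ is a homeomorphism onto the clopen set $ss^*$, one concludes $U_s(\xi)\in\cale$. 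This is repairable, but as written the continuity claim does not parse in the field picture.

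The genuine gap is in the reverse direction $U\mapsto V$. You reduce the round trips to the tautologies $r(g)\,s=g$ and $s^*\,r(g)\,s=s(g)$, which settles bijectivity \emph{once both assignments are known to be well defined}, but you never verify that the fiberwise family $(V_g)_{g\in\calg}$ produced by \eqref{hilbertgroupoidaction} assembles into a continuous field of linear maps $s^*\cale\to r^*\cale$, i.e.\ into a groupoid action in Le Gall's sense. This is the one step of the proof that is not a pointwise computation, and the paper spends real effort on it: one first establishes a uniform bound on the fibers $\|V_g(\xi_{s(g)})\|$ by comparing with $\|U_s(\xi)\|\le\|\xi\|$ and choosing $\xi$ with norm close to that of its fiber, and then uses that the simple functions $\xi\circ s$ form a total set in $s^*\cale$ together with the identity $V(\xi\circ s)=U_s(\xi)\circ r\in r^*\cale$ and Paravicini's criterion (Proposition 3.1.30 of \cite{ParaviciniThesis}) to conclude that the bounded family $V$ is a continuous field of linear maps. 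Without this approximation argument the inverse functor $\rho_{\rm H}^{-1}$ is not defined on objects, so the claimed strict equivalence is not yet established. The remaining items (transfer of the equivariance axioms, compatibility via $V_{x e}=\id$ on fibers over $e$, and the tensor product statements) are handled in your sketch at the same level of detail as in the paper and are fine.
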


\begin{proof}
Let us be given a $\calg$-action $V: s^* \cale \longrightarrow r^* \cale$, and define $U$ by (\ref{hilbertaction}).
Let $s \in S$ and $\xi \in \cale$. We aim to show that
$U_s(\xi)$ is an upper semi-continuous bundle over $X$, which would prove that $U_s(\xi)$ is in $\cale$.
Define a homeomorphism $\psi: s s^* \longrightarrow s$ by
$\psi(x) = x s$. Then $x=r(\psi(x))$, and so
$$\big(U_s(\xi)\big)_{r(\psi(x))} = 1_{\{r(\psi(x)) \in s s^*\}} V_{\psi(x)} \big(\xi_{s(\psi(x))}\big)$$
for all $x \in s s^*$ by (\ref{hilbertaction}). Consequently we have the identity
$$U_s(\xi) \circ r  =
V( \xi \circ s )$$
on the set $s$. Since
$\xi \circ s \in s^* \cale$, we have $V(\xi \circ s) \in r^* \cale$ by
\cite[Def. 3.3.1]{ParaviciniThesis}. Thus $U_s(\xi) \circ r$ is
upper semi-continuous on the set $s$ since $V( \xi \circ s )$ is upper semi-continuous there.
As $r|_s$ is a homeomorphism, $U_s(\xi)$ is upper semi-continuous on $s s^*$. Since $U_s(\xi)$ is defined to be zero
outside of $s s^*$ by (\ref{hilbertaction}), and since $s s^*$ is a clopen set, $U_s(\xi)$ is upper semi-continuous on $X$.

We need to check that $U$ defines a compatible $S$-action according to \cite[Def. 3.3]{burgiDescent}
and Definition \ref{definitionCompatibleHilbert}.
We define $U_s^* := U_{s^*}$ for all $s \in S$.
Recall that the inner product on $\cale$ can be computed in the bundle picture by
$\big(\langle \xi, \eta
\rangle_\cale \big)_{x} = \langle \xi_x , \eta_x \rangle_{\cale_x}$
for all $\xi, \eta \in \cale$ and $x \in X$.
Let $s \in S$ and $x \in X$. By (\ref{cstaraction}) and (\ref{hilbertaction})
we have
\begin{eqnarray*}
&& (\langle U_s(\xi),\eta  \rangle_\cale )_x = \langle ( U_s(\xi))_x,\eta_x \rangle_{\cale_{x}}
=  \langle 1_{\{x \in s s^*\}} V_{x s} (\xi_{ s^* x s} ), V_{x s} V_{s^* x}(\eta_x) \rangle_{x}   \\
&=& 1_{\{x \in s s^*\}} \alpha_{x s} \langle  \xi_{ s^* x s} ,   1_{\{x \in s s^*\}} V_{s^* x}(\eta_x) \rangle_{\cale_{s^* x s}}   \\
&=&  1_{\{x \in s s^*\}} \alpha_{x s} \langle  \xi_{ s^* x s} ,  1_{\{s^* x s \in s^* s\}} V_{s^* x s s^*}(\eta_{s s^* x s s^*}) \rangle_{\cale_{s^* x s}}   \\
&=& 1_{\{x \in s s^*\}} \alpha_{x s} \langle  \xi_{ s^* x s} ,  (U_{s^*}(\eta))_{s^* x s} \rangle_{\cale_{s^* x s}}\\
&=& (\beta_s \langle \xi , U_{s^*}(\eta) \rangle_{\cale})_x .  
\end{eqnarray*}
Since $x$ was arbitrary we have $\langle U_s(\xi),\eta  \rangle_\cale = \beta_s \langle \xi , U_{s^*}(\eta) \rangle_{\cale}$
as required in \cite[Def. 3.3]{burgiDescent}.
We leave the remaining verification that $U$ is a compatible $S$-action to the reader.

In the other direction, let us now be given an $S$-action $U$ and define $V$ by
(\ref{hilbertgroupoidaction}).
We want to show that $V$ is a continuous field of linear maps $V:s^* \cale \longrightarrow r^* \cale$.
$V$ is bounded
as we see by the estimate (with (\ref{hilbertgroupoidaction}))
$$\|V_g(\xi_{r(g)})\| = \|U_s(\xi)_{r(g)}\| \le \|U_s(\xi)\| \le \|\xi\| \le (1+\varepsilon) \|\xi_{r(g)}\|$$
(here we have chosen $\xi \in \cale$ with $\|\xi\| \le (1+ \varepsilon)\|\xi_{r(g)}\|$).

By (\ref{hilbertgroupoidaction}) we have
$V(\xi \circ s)= U_s(\xi) \circ r \in r^* \cale$.
Since $V$ is a bounded familiy of maps mapping simple function into $r^* \cale$,
and the simple functions form a total set in the sense of \cite[Proposition 3.1.26]{ParaviciniThesis},
$V$ is a continuous field of linear maps by \cite[Proposition 3.1.30]{ParaviciniThesis}.

$V$ is an automorphism in the sense of Le Gall \cite{legall1999} as for $g \in s \in S$ we have
$$V_{g^{-1} g}(\xi_{s(g)}) = V_{s(g)}(\xi_{s(g)}) = (U_{s^* s}(\xi))_{r(s(g))} = (1_{s* s} \cdot \xi)_{s(g)}= \xi_{s(g)}$$
by (\ref{hilbertgroupoidaction}) and the definition of a compatible Hilbert module.
The verification of $V_g V_h = V_{gh}$ is also straightforward.
The last claims like $\rho_{\rm H}(\cale \otimes \calf) = \rho_{\rm H}(\cale) {\otimes}  \rho_{\rm H}(\calf)$
are also easy to check.
%
\end{proof}

By Proposition \ref{theroemRhoHilbertmodule} we shall omit notating $\rho_{\rm H}$ since it is understood that we have both
the $\calg$- and the compatible $S$-action present if one of these actions is defined.
Recall (see \cite[5.1.2]{ParaviciniThesis}) that for a continuous field
of bilinear maps, $\mu:\cale \times \calf \longrightarrow \caln$, of
$\calg$-Banach spaces $\cale,\calf,\caln$, there is a convolution
product $* : C_c(r^* \cale) \times C_c(r^* \calf) \longrightarrow
C_c(r^* \caln)$ defined by
$$(\xi * \eta)_g = \int_{\calg^{r(g)}} \mu(\xi_h,  h(\eta(h^{-1} g))) d \lambda^{r(g)} (h)$$
for $g \in \calg$.
%
{The groupoid descent homomorphism $j^\calg: KK^\calg(A,B)
\longrightarrow KK(A \rtimes \calg, B \rtimes \calg)$ is defined by
$j^\calg[\pi,\cale,T] =  [\pi', \overline{C_c(r^* \cale)}, T \otimes 1]$,
where $C_c(r^* \cale)$ is to be closed under the $B\rtimes \calg$-inner, i.e. $\overline{C_c(r^* B)}$-inner, product
which is defined by convolution.
Here, $\pi'$ acts also by convolution. }


\begin{theorem}   \label{theoremKKQuiggSieben}
Let $S$ be a Cantor inverse semigroup embedded in a groupoid $\calg$.

(a) For all $\calg$-$C^*$-algebras $A$ and $B$ there exists a group
isomorphism
$$\rho :KK^\calg(A,B) \longrightarrow  \widehat{KK^S}(A,B)$$
induced by the identity map on cycles. (To be precise, $\rho[\cale,T] = [\rho_{\rm H}(\cale),T]$ for $[\cale,T] \in \E^\calg(A,B)$.)

(b) $\rho$ respects the Kasparov product, that is,
$\rho( x \otimes_B y) = \rho(x) \otimes_{B} \rho(y)$
if the product $x \otimes_B y$ is defined for $x \in KK^\calg(A,B)$ and
$y \in KK^\calg(B,C)$.

(c) $\rho$ respects functoriality in $A$ and $B$,
that is, equivariant homomorphisms $f:A' \rightarrow A$ and $g : B
\rightarrow B'$ enjoy $f^* \rho = \rho f^*$ and $g_* \rho = \rho
g_*$.

(d) One has $\rho \tau_D^\calg = \widehat{\tau_{D}^S} \rho$ for every separable $\calg$-$C^*$-algebra $D$.

(e) $\rho$ respects the Kasparov cup-cap product.

(f) Define a descent homomorphism $\widehat{j^{S}} :=  \nu
j^{\calg} \rho^{-1}$ by the commuting diagram
$$
\begin{xy}
\xymatrix {
KK^\calg(A,B) \ar[d]_{j^\calg}  \ar[rr]^\rho   & &  \widehat{KK^{S}} (A, B)  \ar[d]^{\widehat{j^S}}\\
KK(A \rtimes \calg,B \rtimes  \calg)  \ar[rr]^\nu  & &  KK ( A \widehat{\rtimes } S,  B  \widehat{\rtimes} S).
}
\end{xy}
$$
%
Here, the isomorphism $\nu$ is understood to be induced by the isomorphism $\psi$ of
Theorem \ref{theoremQuiggSieben}.
On the level of cycles one then has
\begin{eqnarray}
\widehat{j^S}[\pi,\cale,T] &=& [\hat \pi, \cale \otimes_B (B \widehat \rtimes  S), T \otimes 1], \label{hatJS} \\
\hat \pi(a \widehat \rtimes s) \big(\xi \otimes (b \widehat \rtimes t) \big ) &:=&
\pi(a) U_s (\xi) \otimes \big(s(b) \widehat \rtimes s t \big)    \label{tildePiJS}
\end{eqnarray}
for all $s,t \in S, \xi \in \cale, a \in A_s$ and $b \in B_t$.

%


%
\end{theorem}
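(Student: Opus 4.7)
The plan is to leverage the equivalence $\rho_{\rm H}$ from Proposition \ref{theroemRhoHilbertmodule}, which already identifies $\calg$-Hilbert bimodules with compatible $S$-Hilbert bimodules on the nose (being an identity functor in the sense $\rho_{\rm H}\rho_{\rm H}^{-1}=\id$). Consequently the cycle map $(\cale,T)\mapsto (\rho_{\rm H}(\cale),T)$ leaves the underlying module and operator untouched, so the real work is to check that the Kasparov cycle axioms, homotopy, Kasparov product, functoriality, and $\tau_D$ operation all translate between the two equivariance pictures.

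For (a), I would first verify that if $(\pi,\cale,T)\in\E^\calg(A,B)$ then its image is a cycle in $\widehat{\E^S}(A,B)$. The compactness requirements $[\pi(a),T]$, $\pi(a)(T^2-1)$, $\pi(a)(T-T^*)\in\calk(\cale)$ are independent of any group action and transfer verbatim. The essential point is the equivariance condition on $T$. Using (\ref{hilbertaction}) in the field picture one computes fibrewise
$$\bigl(U_s T U_s^* \xi - T\xi\bigr)_x \;=\; 1_{\{x\in ss^*\}} \bigl(V_{xs} T_{s^*xs} V_{xs}^{-1} - T_x\bigr)\xi_x,$$
so $S$-equivariance along the slice $s$ is literally fibrewise $\calg$-equivariance over the clopen set $ss^*$. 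Since $S$ is a full system of slices generating the topology of $\calg$, the collection of all these conditions is equivalent to Le Gall--style $\calg$-equivariance. Applying the same bijection to $\E^\calg(A,B[0,1])$ shows homotopy is preserved, hence $\rho$ descends to a well-defined group isomorphism on $KK$-groups.

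For (b)--(e), the last sentence of Proposition \ref{theroemRhoHilbertmodule} says $\rho_{\rm H}$ respects both internal and external tensor products. Since the Kasparov product, the $\tau_D$ operation, the cup-cap product, and functorial pullback/pushforward are all built from these tensor products with the \emph{same} operator formulas in both theories, each of these properties transfers immediately from the identification of underlying cycles. No new analysis is required.

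For (f), the identity $\widehat{j^S}=\nu j^\calg \rho^{-1}$ is a definition, so only the explicit cycle formula (\ref{hatJS})--(\ref{tildePiJS}) needs proof. Starting from $[\pi,\cale,T]\in\widehat{KK^S}(A,B)$, the groupoid descent produces the Hilbert $B\rtimes\calg$-module $\overline{C_c(r^*\cale)}$ with the convolution representation. The plan is to define a map from $\cale\otimes_B(B\widehat\rtimes S)$ to $\overline{C_c(r^*\cale)}$ on elementary tensors $\xi\otimes(b\widehat\rtimes s)$, supported in the slice $s$, by $(g\mapsto \xi_{r(g)}b_{r(g)})\cdot 1_s$, use density of slice-supported sections to show it is a unitary isomorphism of Hilbert modules, and then compute both representations on slice-supported generators using (\ref{cstaraction}), (\ref{hilbertaction}), and Theorem \ref{theoremQuiggSieben}, which directly yields (\ref{tildePiJS}).

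The main obstacle will be the bookkeeping in (f): identifying $\overline{C_c(r^*\cale)}$ with $\cale\otimes_B(B\widehat\rtimes S)$ and matching the convolution representation to the clean formula (\ref{tildePiJS}) requires careful fibrewise computation and a density argument through slice-supported sections. The equivariance translation in (a) is the other subtlety, but becomes essentially automatic once one passes to the field picture and invokes that $S$ generates the topology of $\calg$.
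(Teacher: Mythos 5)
Your proposal follows essentially the same route as the paper's own proof: part (a) is the same fibrewise translation over slices in the field picture (the paper carries out the compactness bookkeeping you allude to via the local identity $Q_a|_s=(k_{b,s}\circ r)|_s$, a total-set density argument, and the canonical isomorphism $r^*\calk(\cale)\cong\calk(r^*\cale)$), parts (b)--(e) are likewise settled by observing that the products and functorial operations are defined identically in both theories, and part (f) is proved by the same identification of $\overline{C_c(r^*\cale)}$ with $\cale\otimes_B(B\widehat\rtimes S)$ in the bundle picture followed by the computation that the convolution integral over slice-supported elements collapses to the single term $h=r(g)s$. No gaps; only note that in your displayed fibre identity the term $T\xi$ should carry the cutoff $U_{ss^*}$ (i.e.\ $TU_sU_{s^*}$), exactly as in the cycle condition of $\E^S$, so that it holds on all of $X$ and not just on $ss^*$.
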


\begin{proof}
(a) Let $(\pi,\cale,T)$ be a cycle in $\widehat{\E^S}(A,B)$.
Define the $\calg$-action $V$ as in Proposition
\ref{theroemRhoHilbertmodule}. We need to show that this action $V$ defines a groupoid equivariant Kasparov cycle in the sense of
\cite{legall1999}. Since the cycle $(\pi,\cale,T) \in
\E^\calg(A,B)$ is evidently already an ordinary Kasparov cycle without $\calg$-action, by
\cite[Def. 5.2]{legall1999} we only need to check that for every $a \in r^* A$
$$Q_{a} := \big((r^*\pi)(a) \big) \big(V (s^* T) V^* - r^* T \big)$$
is an element of $\calk(r^* \cale)$.
Let $\xi \in \cale$ and
$a:= b \circ r$ for $b \in A$.
Let $s \in S$ and $g \in s$.
Then we have
\begin{equation}  \label{eq321}
(U_{s s^*} \xi)_{r(g)} = (1_{s s^*} \xi)_{r(g)} = \xi_{r(g)}.
\end{equation}
Using this we have with (\ref{hilbertgroupoidaction}) (applied to $g$ and $g^{-1}$)
%
%
\begin{eqnarray*}
\big(Q_a (\xi \circ r)\big)_g &=& \big(\big((r^*\pi)(b \circ r) \big) \big( V (s^* T) V^* - (r^* T) \big) (\xi \circ r) \big)_{g}\\
&=& \big(\pi_{r(g)}(b_{r(g)}) \big)  \big (V_g T_{s(g)}V_{g^{-1}}- T_{r(g)} \big ) \xi_{r(g)}\\
&\stackrel{(\ref{hilbertgroupoidaction})}{=}& \big(\pi(b) ( U_s T U_{s^*} - T ) \xi \big)_{r(g)}\\
&=& \big(\pi(b) ( U_s T U_{s^*} - T U_{s} U_{s^*}) \xi \big)_{r(g)}\\
&=& (k_{b,s} (\xi))_{r(g)} = \big ( ( k_{b,s} \circ r) ( \xi \circ r) \big )_g
\end{eqnarray*}
if we set
$$k_{b,s}:=\pi(b) ( U_s T U_{s^*} - T U_s U_{s^*}) \in \call(\cale).$$
So we have proved that on the local neighborhood $s$, $(Q_a-(k_{b,s}\circ r))(\xi \circ r) = 0$.
Since we have this identity only on the set $s$, we restrict the bundles over $\calg$ to bundles over $s$ in the next argument.
Since $\big(Q_a-(k_{b,s} \circ r)\big)|_s$ is a bounded continuous field of linear maps,
and such a map is already determined
on the total set of simple functions $\big(\xi \circ r\big)|_s \in (r^* \cale)|_s$ (cf. \cite[Proposition 3.1.30]{ParaviciniThesis}),
we obtain $\big(Q_a-(k_{b,s} \circ r) \big)|_s= 0$ by approximation.

By definition of a Kasparov cycle (\cite[Def. 3.6]{burgiDescent}), $k_{b,s}$ is in $\calk(\cale)$, and so $k_{b,s} \circ r \in r^* \calk(\cale)$.
Consequently, $Q_a|_s \in \big(r^* \calk(\cale)\big)|_s$.
Varying over all $s \in S$, we see that $(Q_a)_g \in \calk(\cale_{r(g)})$ for all $g \in \calg$.
By a canonical isomorphism $r^* \calk(\cale) \cong \calk(r^* \cale)$ by Le Gall \cite{legall1999}, we obtain $Q_a \in \calk(r^* \cale)$.

Note that $Q: r^* A \longrightarrow \call(r^* \cale): a \mapsto Q_a$ defines a bounded $C_0(\calg)$-linear map.
Now let us be given any $a \in r^* A$. Since the set of simple functions $A \circ r$ forms a total set in $r^* A$, $(Q_a)_{g}$ is approximated
by elements of the form $(Q_{b \circ r})_g \in \calk(\cale_{r(g)})$ ($b \in A$), and so $(Q_a)_g \in \calk(\cale_{r(g)})$.
Hence $Q_a \in r^* \calk(\cale)$. This proves that $(\pi,\cale,T)$ is a cycle in $\E^\calg(A,B)$.

Now assume that $(\pi,\cale,T)$ is a cycle in $\E^\calg(A,B)$.
We will use the above computations. Now $Q_a$ is an element of $\calk(\cale)$. By the analog argument as before we obtain
$\big(k_{b,s} \circ r \big)|_s \in \big(r^* \calk(\cale)\big)|_s$.
Consequently, ${k_{b,s}}({r(g)}) \in \calk(\cale_{r(g)})$.
Since $r|_s : s \rightarrow s s^*$ is a homeomorphism, by varying $g \in s$ we obtain $k_{b,s}(x) \in \calk(\cale_x)$ for all $x \in s s^*$.
Since $k_{b,s}$ vanishes outside of $s s^*$, we obtain $k_{b,s}(x) \in \calk(\cale_x)$ for all $x \in X$.
By the isomorphism $\calk(\cale) = \calk({\rm id}_X^* \cale) \cong {\rm id}_X^* \calk(\cale)$
we see that $k_{b,s} \in \calk(\cale)$.
We also have $[T,U_{s s^*}]=[T,1_{s s^*}] =0$.
This checks that $(\pi,\cale,T)$ is a cycle in $\widehat{\E^S}(A,B)$.
We have now also proved homotopy
invariance of $\rho$, as $\rho$ maps also 
$\E^\calg(A,B[0,1])$ to $\widehat{\E^S}(A,B[0,1])$ on the level of cycles.
%

(b) The claim is valid because the Kasparov products are essentially defined in the same way:
compare \cite[Def. 6.1]{legall1999} with \cite[Def.
19]{burgiSemimultiKK}. Also, the points (c) and (d) are easy as $f^*$, $g_*$ and the map $\tau$ are defined in the same way in groupoid and
inverse semigroup equivariant $KK$-theory. The same can be said for point (e) which follows directly from points (b) and (d).

(f)  Let us give a remark about the descent homomorphism for groupoids.
Note that $C_c(r^* \cale)$ is a bundle over $\calg$, and so $\overline{C_c(r^* \cale)}$ is one.
It is isomorphic to $\cale \otimes_B (B \rtimes \calg)$.
Here, $B$ acts on $B \rtimes
\calg$ by $(b_x)_{x \in X} \cdot (c_g)_{g \in \calg} =
(b_{r(g)} c_g)_{g \in \calg}$ for $b \in B, c \in r^* B$.
The tensor product $\cale \otimes_B (B \rtimes \calg)$ is also a $C_0(\calg)$-module, since $B \rtimes \calg$ is one.
Note that we can write
$$\cale \otimes_B (B \rtimes \calg) \cong \big( \cale_{r(g)} \otimes (B \rtimes \calg)_g \big)_{g \in \calg} \cong \big( \cale_{r(g)} \otimes B_{r(g)}\big)_{g \in \calg}$$
in the bundle picture by identifying $\xi \otimes \big((b_g)_{g \in \calg}\big)$ with
$\big(\xi_{r(g)} \otimes b_g \big)_{g \in \calg}$.


By using the formulas (\ref{hatJS}) and (\ref{tildePiJS}) for $\widehat{j^S}$, given $(\pi,\cale,T) \in \E^\calg(A,B)$ we
have
\begin{eqnarray}
j^\calg [\pi,\cale,T]  &=&  [\pi',\cale \otimes_B (B \rtimes \calg), T \otimes 1],\\
\nu^{-1} \widehat{j^S} \rho[\pi,\cale,T] &=& [\psi^{-1} \hat \pi, \cale \otimes_B \psi (B \widehat \rtimes S), T \otimes 1].  \label{eq223}
\end{eqnarray}
We are going to show that $\pi' = \psi^{-1} \hat \pi$,
where we identify $\cale \otimes_B (B \rtimes \calg)$ with $\cale \otimes_B \psi (B \widehat \rtimes S)$
without spelling out an isomorphism for it.
(In this sense the notation $\psi^{-1} \hat \pi$ in (\ref{eq223}) is not precise.)
Let $s,t \in S, a \in A_s$ and $b \in B_t$. By Theorem \ref{theoremQuiggSieben} we have
\begin{eqnarray}
&& \hat \pi ( a \widehat \rtimes s)( \xi \otimes \psi(b
\widehat \rtimes t))  \label{equ221}  \\  
\label{sconvform} &=&  \hat \pi \big( \psi^{-1} (1_{\{g \in s \}}
a_{r(g)})_{g \in \calg} \big) \big ( (1_{\{g \in
t\}} \xi_{r(g)}  \otimes b_{r(g)})_{g \in \calg} \big).
\end{eqnarray}
On the other hand, by (\ref{tildePiJS}), (\ref{cstaraction}) and (\ref{hilbertaction}), (\ref{equ221}) becomes
\begin{eqnarray}
&& \pi(a) U_s (\xi) \otimes \psi \big(s(b) \widehat \rtimes s t \big)\\
\nonumber &=&
 \pi \big ( (a_x)_{x \in X} \big )  \Big (1_{\{x \in s s^* \}}
 V_{x s}(\xi_{s^* x s}) \Big)_{x \in X} \otimes  \Big (1_{\{g\in st\}} s(b)_{r(g)}\Big)_{g \in \calg} \\
 \label{someform}
 &=&  \Big ( 1_{\{g \in s t \}} 1_{\{r(g) \in s s^*\}}
\pi(a_{r(g)}) V_{r(g) s} (\xi_{s^* r(g)
 s}) \otimes \alpha_{r(g) s}(b_{s^* r(g) s}) \Big )_{g \in \calg}.
%
\end{eqnarray}
Computing $\pi'$ by convolution we get
\begin{eqnarray} 
&&  \pi' \big( (1_{\{g \in s \}}
a_{r(g)})_{g \in \calg} \big) \big ( (1_{\{g \in
t\}} \xi_{r(g)}  \otimes b_{r(g)})_{g \in \calg} \big)  \label{sconvform8} \\
&=&
\Big ( \int_{\calg^{r(g)}} \pi(1_{\{h \in s\}} a_{r(h)}) V_h \big
( 1_{\{ h^{-1} g \in t\}} \xi_{r(h^{-1} g)}    \big )  \label{sconvform2}  \\
&& \qquad \qquad  \otimes \alpha_h (b_{r(h^{-1} g)})  d
\lambda^{r(g)}(h)  \Big )_{g \in \calg}.  \nonumber
%
\end{eqnarray}
Since in (\ref{sconvform2}) there appears the expression $1_{\{h \in s\}} 1_{\{ h^{-1} g \in t\}}$,
the function under the integral vanishes unless $h = r(g) s$.
(Because $g \in s t$, and so
$g = (r(g) s) (t s(g))$, and this is the only possible
decomposition in $s$ and $t$.)
Recalling that $\lambda^{r(g)}$ is chosen to
be the counting measure, (\ref{sconvform2}) is exactly
(\ref{someform}). 
Consequently, (\ref{sconvform}) is (\ref{sconvform2}) and so $\pi' = \psi^{-1} \hat \pi$.
\end{proof}

\begin{corollary}   \label{corollaryBCmapSieben}
Let $S$ be a Cantor inverse semigroup embedded in a groupoid $\calg$ and $A$ a compatible $S$-Hilbert $C^*$-algebra. Then there exists a Baum--Connes map
\begin{eqnarray}
&& \widehat{\mu^S_A} : \lim_{Y \subseteq \underline E \calg} \widehat{KK^S} (C_0(Y),A) \longrightarrow K(A \widehat \rtimes S) \label{firstBCmap}.
\end{eqnarray}
\end{corollary}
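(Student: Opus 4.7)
The plan is to transport the groupoid Baum--Connes map $\mu^\calg_A$ of \cite{0932.19005} across the isomorphisms supplied by Theorems \ref{theoremQuiggSieben} and \ref{theoremKKQuiggSieben}. Recall that the groupoid assembly map already provides a homomorphism
$$\mu^\calg_A : \lim_{Y \subseteq \underline E \calg} KK^\calg(C_0(Y),A) \longrightarrow K(A \rtimes \calg),$$
where $Y$ ranges over the $\calg$-compact subsets of the universal proper $\calg$-space $\underline E \calg$, and each $C_0(Y)$ carries a canonical $\calg$-action (hence, by Theorem \ref{theoremQuiggSieben}, also a compatible $S$-action so that $\widehat{KK^S}(C_0(Y),A)$ is defined).

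My first step is to replace each term in the left-hand direct system by its $\widehat{KK^S}$-counterpart using Theorem \ref{theoremKKQuiggSieben}(a), which supplies an isomorphism $KK^\calg(C_0(Y),A) \cong \widehat{KK^S}(C_0(Y),A)$ for every such $Y$. My second step is to pass to the colimit: an inclusion $Y \hookrightarrow Y'$ induces an equivariant $*$-homomorphism $C_0(Y') \to C_0(Y)$, and by the functoriality in the first variable recorded in Theorem \ref{theoremKKQuiggSieben}(c), $\rho$ commutes with the induced restriction maps. Hence the individual $\rho$'s assemble into an isomorphism of direct systems and yield an isomorphism
$$\rho_\infty : \lim_{Y} KK^\calg(C_0(Y),A) \xrightarrow{\;\cong\;} \lim_{Y} \widehat{KK^S}(C_0(Y),A)$$
on the colimits.

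Finally, the isomorphism $\psi : A \widehat \rtimes S \to A \rtimes \calg$ of Theorem \ref{theoremQuiggSieben} induces a $K$-theory isomorphism $\psi_* : K(A \widehat \rtimes S) \xrightarrow{\cong} K(A \rtimes \calg)$. Setting
$$\widehat{\mu^S_A} \;:=\; \psi_*^{-1} \circ \mu^\calg_A \circ \rho_\infty^{-1}$$
then gives the desired homomorphism. The only real obstacle is the naturality bookkeeping needed to form $\rho_\infty$, which is handled by Theorem \ref{theoremKKQuiggSieben}(c); once that is in hand, $\widehat{\mu^S_A}$ is just a composition of established isomorphisms with the pre-existing groupoid assembly map. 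I would finally remark, using part (f) of Theorem \ref{theoremKKQuiggSieben}, that this construction is equivalent to the more intrinsic one obtained by pairing a cut-off class with the Sieben-type descent homomorphism $\widehat{j^S}$, so the definition is consistent with the expected assembly-map format.
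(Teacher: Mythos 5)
Your proposal is correct and follows essentially the same route as the paper: the paper's own proof simply transports Tu's groupoid assembly map $\mu^\calg_A$ across the isomorphism $\rho$ of Theorem \ref{theoremKKQuiggSieben} and the isomorphism $\psi$ of Theorem \ref{theoremQuiggSieben}, exactly as you do. Your additional bookkeeping (using part (c) to see that the termwise isomorphisms assemble into an isomorphism of direct systems) is a correct elaboration of what the paper leaves implicit.
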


\begin{proof}
This Baum--Connes map is immediately obtained by applying
the isomorphism $\rho$ of Theorem \ref{theoremKKQuiggSieben} and the isomorphism $\psi$ of Theorem \ref{theoremQuiggSieben}
to
Tu's \cite{0932.19005} Baum--Connes map for groupoids, 
$$\mu^\calg_A : \lim_{Y \subseteq \underline E \calg} KK^\calg (C_0(Y),A) \longrightarrow K(A \rtimes \calg).$$
\end{proof}

%

%

\section{The expansion homomorphism}    \label{sectionExpansion}

In this section we assume that $S$ is a Cantor inverse semigroup embedded in Paterson's groupoid
$\calg_S$, which we suppose to be Hausdorff, associated to $S$, see Paterson \cite{paterson1999} and
Lemma \ref{lemmaPatersonsGroupoidCX}.


%
%
%



\begin{proposition} [Khoshkam and Skandalis \cite{1061.46047}]    \label{lemmaExpansionFunctCstar}
There is a covariant (so-called) expansion functor $\epsilon_{C^*}$ from
the category of $S$-Hilbert $C^*$-algebras to the category of compatible $S$-Hilbert $C^*$-algebras
given by
$\epsilon_{C^*}(A,\alpha,S) = (A \rtimes_{\alpha|_E} E, \beta,S)$,
where the $S$-action $\beta$ is defined by
$\beta_s(a \rtimes e) := \alpha_s(a) \rtimes s e s^*$
for $e \in E, a \in A_e$ and $s \in S$.
For a morphism $\pi:A \rightarrow B$ between $S$-Hilbert $C^*$-algebras one sets
$\epsilon_{C^*}(\pi) = \pi \otimes 1$,
where $(\pi \otimes 1) (a \rtimes e) := \pi(a) \rtimes e$ for $e \in E$ and $a \in
A_e$.
%
\end{proposition}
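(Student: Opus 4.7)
The plan is to verify that $\beta$ defines an $S$-action on $A \rtimes_{\alpha|_E} E$ making it a compatible $S$-Hilbert $C^*$-algebra, and that the morphism assignment $\pi \mapsto \pi \otimes 1$ is functorial and $S$-equivariant. Because $A \rtimes_{\alpha|_E} E$ is the enveloping $C^*$-algebra of an $\ell^1$-completion spanned by generators $a \rtimes e$ with $a \in A_e$, it suffices to define $\beta_s$ on these generators and extend by the universal property.

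First I would check that the formula $\beta_s(a \rtimes e) = \alpha_s(a) \rtimes s e s^*$ is well-defined: since $a \in A_e$, under the convention $\alpha_s(a) := \alpha_s(1_{s^*s} \cdot a)$ from Section \ref{SectionCrossed}, the output lies in $A_{s e s^*}$, because $\alpha_s$ restricts to a partial isomorphism $A_{e s^* s} \to A_{s e s^*}$ (using that $s e s^* \le s s^*$). Additivity is clear on $\ell^1$-sums, and the $*$-compatibility $\beta_s((a \rtimes e)^*) = \beta_s(a \rtimes e)^*$ reduces to $\alpha_{s^*} = \alpha_s^{-1}$ on the appropriate ideals.

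Next I would verify the $S$-action axioms on generators. Multiplicativity is a direct unfolding:
\[
\beta_s \beta_t(a \rtimes e) = \beta_s\bigl(\alpha_t(a) \rtimes t e t^*\bigr) = \alpha_{st}(a) \rtimes s t e t^* s^* = \beta_{st}(a \rtimes e),
\]
using that $\alpha$ is itself an $S$-action and $s t e t^* s^* = (st) e (st)^*$. The involutive relation $\beta_{s^*} = \beta_s^*$ is analogous, and the identity $\beta_e(a \rtimes f) = 1_e a \rtimes e f$ (coming from $\alpha_e = \id|_{A_e}$ and commutativity of $E$) is immediate. Extending $\beta_s$ from the $\ell^1$-level to the enveloping $C^*$-algebra is then routine via the universal property of $A \rtimes_{\alpha|_E} E$, since $(\beta_s,\beta_{s^*})$ intertwines with the canonical covariant representation defining the crossed product.

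For compatibility, recall from Lemma \ref{lemmaPatersonsGroupoidCX} that $C_0(X) \cong C^*(E)$ sits canonically inside $M(A \rtimes_{\alpha|_E} E)$ with $1_e \cdot (a \rtimes f) = 1_e a \rtimes e f$ on generators, which matches $\beta_e(a \rtimes f)$ computed above and so verifies Definition \ref{DefCompatibleHilbertAlgebra}. Finally, $(\pi \otimes 1)(a \rtimes e) := \pi(a) \rtimes e$ is a well-defined $*$-homomorphism because $\pi$ is $E$-equivariant (so $\pi(A_e) \subseteq B_e$) and respects the convolution product of the $E$-crossed product; $S$-equivariance $(\pi \otimes 1) \circ \beta_s = \beta_s \circ (\pi \otimes 1)$ follows at once from $\pi \circ \alpha_s = \alpha_s \circ \pi$, and functoriality is immediate from the formula. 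The main obstacle is purely bookkeeping---carefully tracking the fiber conditions $a \in A_e$ through each identity---but no ingredient beyond standard facts about partial crossed products is needed.
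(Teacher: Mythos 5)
Your proposal is correct in substance, but it takes a different route from the paper: the paper does not verify anything directly, it simply cites Khoshkam--Skandalis (\cite[Lemma 6.3]{1061.46047} for the action $\beta$ and \cite[Proposition 5.13]{1061.46047} for the $C_0(X)$-structure) and remarks that compatibility in the sense of Definition \ref{DefCompatibleHilbertAlgebra} then "turns out" to hold. What your direct verification buys is a self-contained check, and in particular it makes explicit the one point the paper leaves implicit, namely that $\beta_e(a\rtimes f)=e(a)\rtimes ef$ coincides with the multiplier action of $1_e\in C^*(E)\cong C_0(X)$ on $A\rtimes_{\alpha|_E}E$, which is exactly Definition \ref{DefCompatibleHilbertAlgebra}. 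Two small points to tighten. First, the input $(A,\alpha,S)$ here is a general, not necessarily compatible, $S$-Hilbert $C^*$-algebra, so it need not carry a $C_0(X)$-structure and the convention $\alpha_s(a):=\alpha_s(1_{s^*s}\cdot a)$ from Section \ref{SectionCrossed} is not literally available; you do not need it, since for $a\in A_e=\alpha_e(A)$ one has $\alpha_s(a)=\alpha_{se}(a)$ and $\alpha_{ses^*}\alpha_{se}=\alpha_{ses^*se}=\alpha_{se}$, which already places $\alpha_s(a)$ in $A_{ses^*}$. Second, besides additivity and the $*$-relation you should record that $\beta_s$ respects the convolution product, i.e.\ $\beta_s\big((a\rtimes e)(b\rtimes f)\big)=\beta_s(a\rtimes e)\,\beta_s(b\rtimes f)$, which follows from $ses^*(\alpha_s(b))=\alpha_{se}(b)=\alpha_s(e(b))$ and $ses^*\cdot sfs^*=s(ef)s^*$; this is what guarantees that $\beta_s$ is a $*$-endomorphism of $\ell^1(E,A)$ and hence extends to the enveloping $C^*$-algebra. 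With those additions your argument is complete and agrees with the cited results.
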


\begin{proof}
The action $\beta$ was defined in Khoshkam--Skandalis \cite[Lemma
6.3]{1061.46047}. For the $C_0(X)$-structure see \cite[Proposition
5.13]{1061.46047}. It turns out that the action is compatible in the sense of Defintion \ref{DefCompatibleHilbertAlgebra}.
%
%
\end{proof}

%

\begin{theorem} [Khoshkam and Skandalis \cite{1061.46047}]   \label{theoremKoshkamSkandalis}
For a compatible $S$-Hilbert $C^*$-algebra $A$ define
$\beta$ as in Proposition \ref{lemmaExpansionFunctCstar}. Then there
are isomorphisms
%
$$
\begin{xy}
\xymatrix {
A \rtimes_\alpha S & (A \rtimes_{\alpha|_E} E)
\widehat \rtimes_\beta S \ar[l]_{\gamma} 
\ar[r]^{\psi} & (A \rtimes_{\alpha|_E} E) \rtimes \calg_S,\\
A \rtimes_{\alpha,{r}} S  \ar[rr]^{\mu} &&
(A \rtimes_{\alpha|_E} E) \rtimes_{r} \calg_S,
}
\end{xy}
$$
where
$\gamma((a \rtimes e) \widehat \rtimes s)  =  a \rtimes e s$
for
$s \in S, e \in E, e \le s s^*$ and $a \in  A_e$. (Here, $\psi$ denotes the isomorphism of
Quigg and Sieben, see Theorem \ref{theoremQuiggSieben}.)
\end{theorem}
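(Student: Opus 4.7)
The plan is to deduce the three isomorphisms from ingredients already in place. First, $\psi$ is obtained by applying Theorem \ref{theoremQuiggSieben} to the compatible $S$-Hilbert $C^*$-algebra $(A \rtimes_{\alpha|_E} E, \beta)$ supplied by Proposition \ref{lemmaExpansionFunctCstar}; the standing Hausdorff hypothesis on $\calg_S$ together with Lemma \ref{lemmaPatersonsGroupoidCX} guarantee that $S$ is a Cantor inverse semigroup embedded in $\calg_S$, so the hypotheses of Theorem \ref{theoremQuiggSieben} are met. The reduced isomorphism $\mu$ follows by rerunning the construction for regular representations in place of the universal ones. Thus the substantive task is to construct $\gamma$, which is essentially the content of Khoshkam and Skandalis's main identification in \cite{1061.46047}.

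To construct $\gamma$ I would first define it on the dense $*$-subalgebra $\ell^1(S, A \rtimes_{\alpha|_E} E)$ by $\gamma((a \rtimes e) \widehat \rtimes s) := a \rtimes es$. Restricting attention to generators with $e \le ss^*$ is harmless, since an arbitrary generator can be rewritten in this form by replacing $e$ with $ess^*$ and $a$ with $1_{ess^*}\cdot a$, using that the $E$-action and the $C_0(X)$-multiplication agree in the compatible setting. For such generators $(es)(es)^* = ess^*e = e$, hence $a \in A_e = A_{(es)(es)^*}$ and the right-hand side is an admissible simple tensor of $\ell^1(S, A)$.

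Next I would verify that $\gamma$ is a $*$-homomorphism by direct computation using the formulas in Section \ref{SectionCrossed}. For the product one expands
\[((a \rtimes e) \widehat \rtimes s)\,((b \rtimes f) \widehat \rtimes t) = \big((a \rtimes e) \cdot \beta_s(b \rtimes f)\big) \widehat \rtimes st\]
with $\beta_s(b \rtimes f) = s(b) \rtimes sfs^*$ (Proposition \ref{lemmaExpansionFunctCstar}), computes the inner product in $A \rtimes_{\alpha|_E} E$, applies $\gamma$, and compares with $(a \rtimes es)(b \rtimes ft) = a \cdot (es)(b) \rtimes esft$ in $A \rtimes_\alpha S$. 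The idempotent factors align after invoking compatibility of the $E$-action with the $C_0(X)$-multiplication. The involution is checked analogously from $(a \rtimes e)^* = e(a^*) \rtimes e$ and the involution formula in $(A \rtimes E) \widehat \rtimes S$.

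Finally, to promote $\gamma$ to a $C^*$-isomorphism I would exhibit the $\ell^1$-level inverse $a \rtimes t \mapsto (1_{tt^*} a \rtimes tt^*) \widehat \rtimes t$ and match universal properties: covariant representations of $(A, \alpha, S)$ are in natural bijection with compatible covariant representations of the expanded system $(A \rtimes_{\alpha|_E} E, \beta, S)$, via the universal property of $A \rtimes E$ together with the intertwining relation $v_{sfs^*} = v_s v_f v_s^*$. The main obstacle, and the most delicate step, is verifying this correspondence with the detailed bookkeeping of idempotents that ensures no new $C^*$-relations arise under expansion; this is precisely what Khoshkam and Skandalis establish in \cite{1061.46047}, whose norm-matching argument I would invoke rather than repeat.
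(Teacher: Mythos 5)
Your proposal is correct and follows essentially the same route as the paper, which proves this theorem simply by citing Khoshkam--Skandalis (Theorems 6.2 and 6.5 of \cite{1061.46047}) together with Theorem \ref{theoremQuiggSieben} for $\psi$. Your added algebra-level verifications of $\gamma$ (the identity $(es)(es)^*=e$, the product and involution computations, and the $\ell^1$-inverse, whose well-definedness rests on the Sieben-type relation $(a\rtimes e)\widehat\rtimes es=(a\rtimes e)\widehat\rtimes s$) are sound, and you correctly defer the substantive norm-matching to the cited reference just as the paper does.
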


\begin{proof}
For these results see \cite[Theorems 6.2 and 6.5]{1061.46047}.
\end{proof}

\begin{corollary}   \label{corollaryBCmapKhoshkamSkandalis}
Let $S$ be a Cantor inverse semigroup embedded in Paterson's universal groupoid $\calg_S$ and $A$ an $S$-Hilbert $C^*$-algebra. Then there exists a Baum--Connes map for Khoshkam and Skandalis' crossed product,
\begin{eqnarray}
&& \widehat{\mu^{S}_{A \rtimes E}} : \lim_{Y \subseteq \underline E \calg_S} \widehat{KK^S} (C_0(Y),A \rtimes E)
\longrightarrow K(A \rtimes S).
\label{secondBCmap}
\end{eqnarray}
\end{corollary}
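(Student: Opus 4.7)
The plan is to reduce the statement to the already-established Baum--Connes map from Corollary \ref{corollaryBCmapSieben} applied to a suitable compatible $S$-Hilbert $C^*$-algebra, and then transport the codomain along a canonical isomorphism.

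First, I would invoke Proposition \ref{lemmaExpansionFunctCstar}: given an arbitrary $S$-Hilbert $C^*$-algebra $A$, the expansion $A \rtimes E$ carries a canonical compatible $S$-action $\beta$. In particular, $A \rtimes E$ qualifies as input to Corollary \ref{corollaryBCmapSieben}. Since the Cantor inverse semigroup $S$ is embedded in Paterson's Hausdorff groupoid $\calg_S$ (by Lemma \ref{lemmaPatersonsGroupoidCX}), Corollary \ref{corollaryBCmapSieben} directly yields a Baum--Connes map
\begin{equation*}
\widehat{\mu^S_{A \rtimes E}} : \lim_{Y \subseteq \underline E \calg_S} \widehat{KK^S}(C_0(Y), A \rtimes E) \longrightarrow K\bigl( (A \rtimes E) \widehat \rtimes S \bigr).
\end{equation*}

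Second, I would rewrite the codomain using Theorem \ref{theoremKoshkamSkandalis}. That theorem supplies an explicit isomorphism $\gamma : (A \rtimes E) \widehat \rtimes_\beta S \longrightarrow A \rtimes_\alpha S$, inducing an isomorphism on $K$-theory
\begin{equation*}
\gamma_* : K\bigl( (A \rtimes E) \widehat \rtimes S \bigr) \longrightarrow K(A \rtimes S).
\end{equation*}
Post-composing $\widehat{\mu^S_{A \rtimes E}}$ with $\gamma_*$ produces the desired map with codomain $K(A \rtimes S)$, which is the stated Baum--Connes map.

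There is essentially no obstacle here, because all the hard work is already packaged: the existence of the Baum--Connes map at the compatible-$S$ level comes from Tu's groupoid version pulled back via $\rho$ and $\psi$ (Corollary \ref{corollaryBCmapSieben}), and the Khoshkam--Skandalis identification $\gamma$ (Theorem \ref{theoremKoshkamSkandalis}) supplies the remaining $K$-theory isomorphism. The only point worth checking is that $A \rtimes E$ is indeed a compatible $S$-Hilbert $C^*$-algebra in the sense of Definition \ref{DefCompatibleHilbertAlgebra}, which is exactly what Proposition \ref{lemmaExpansionFunctCstar} (together with the $C_0(X)$-structure of \cite[Prop.~5.13]{1061.46047}) guarantees. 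Hence the proof reduces to the single line $\widehat{\mu^S_{A \rtimes E}} := \gamma_* \circ \widehat{\mu^S_{A \rtimes E}}$ where the right-hand $\widehat{\mu^S_{A \rtimes E}}$ is the one from Corollary \ref{corollaryBCmapSieben}.
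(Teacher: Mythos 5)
Your proposal is correct and follows exactly the paper's own argument: apply the Baum--Connes map of Corollary \ref{corollaryBCmapSieben} with $A$ replaced by the compatible $S$-Hilbert $C^*$-algebra $A \rtimes_{\alpha|_E} E$, then identify the codomain via the Khoshkam--Skandalis isomorphism $\gamma$ of Theorem \ref{theoremKoshkamSkandalis}. The only addition is your explicit check (via Proposition \ref{lemmaExpansionFunctCstar}) that $A \rtimes E$ is compatible, which the paper leaves implicit.
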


\begin{proof}
This Baum--Connes map is immediately obtained by replacing $A$ by $A \rtimes_{\alpha|_E} E$ in the Baum--Connes map
(\ref{firstBCmap}) and using the isomorphism $\gamma$ of Theorem \ref{theoremKoshkamSkandalis}.
\end{proof}

\begin{definition}   \label{defintionExpansionHilbertmod}
{\rm For a non-compatible $S$-Hilbert $B$-module $\cale$ we more shortly denote the $S$-Hilbert module tensor product
$\cale \otimes_B (B \rtimes E)$ by $\cale \rtimes E$.
It is endowed with the diagonal
$S$-action $\overline U = U \otimes \beta$, that is,
$$\overline U_s \big(\xi \otimes (b \rtimes e)\big ) = U_s(\xi) \otimes \big(s(b) \rtimes s e s^* \big)$$
for all $\xi \in \cale, e \in E$ and $b \in B_e$.
}
\end{definition}

We remark that $B$ acts here on $B \rtimes E$ by left multiplication. This multiplication
is a (non-compatible) $S$-equivariant representation.
We set $T \otimes 1 := T \otimes {\rm id}_{B
\rtimes E} \in \call(\cale \rtimes E)$ for $T \in \call(\cale)$.

\begin{lemma}   \label{lemmaExpansionBimodule}
There is an expansion functor $\epsilon_{\rm H}$ within the categories of (non-compatible)
$S$-Hilbert bimodules (morphisms in this category play no role here). It is
given by $\epsilon_{\rm H}(\pi,\cale,U) = (\overline \pi,\cale \rtimes E,\overline
U)$ (see Definition \ref{defintionExpansionHilbertmod}), where
$\overline \pi: A \rtimes E \longrightarrow \call(\cale \rtimes E)$
is the compatible $S$-equivariant map defined by
$\overline \pi (a \rtimes e) := (\pi(a) \otimes 1)
\overline U_e$
for $e \in E$ and $a \in A_e$. Here $A \rtimes E$ means $\epsilon_{C^*}(A)$.
The functor $\epsilon_{\rm H}$ maps compatible Hilbert bimodules to compatible Hilbert bimodules.
\end{lemma}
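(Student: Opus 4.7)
The plan is to verify four things in sequence: (I) $\overline\pi$ extends to a well-defined $*$-homomorphism from $A\rtimes E$ into $\call(\cale\rtimes E)$; (II) the triple $(\overline\pi,\cale\rtimes E,\overline U)$ is $S$-equivariant, that is, $\overline\pi$ intertwines the action $\beta$ on $A\rtimes E$ from Proposition \ref{lemmaExpansionFunctCstar} with $\overline U$; (III) $\overline\pi$ is compatible in the sense of Definition \ref{definitionCompatibleHilbert}; and (IV) compatible Hilbert bimodules are sent to compatible ones. The key structural facts I will rely on throughout are that $E$ is the commutative semilattice of projections of $S$, that $U_e^*=U_{e^*}=U_e$ and $U_e^2=U_{ee}=U_e$ make each $U_e$ a self-adjoint projection on $\cale$, and consequently that $\overline U_e$ is a self-adjoint idempotent in $\call(\cale\rtimes E)$ satisfying $\overline U_e\overline U_f=\overline U_{ef}$.

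For (I), I would work on the dense subalgebra $\ell^1(E,A)$, where by the formulas recalled in Section \ref{SectionCrossed} one has $(a\rtimes e)(b\rtimes f)=a\,e(b)\rtimes ef$ and $(a\rtimes e)^*=e(a^*)\rtimes e$. Using $S$-equivariance of $\pi$ on $\cale$ to commute $U_e$ past $\pi(b)$ and the semigroup identity $\overline U_e\overline U_f=\overline U_{ef}$, the product opens up as
\[
\overline\pi(a\rtimes e)\,\overline\pi(b\rtimes f)=(\pi(a)\otimes 1)\overline U_e(\pi(b)\otimes 1)\overline U_f=(\pi(a\,e(b))\otimes 1)\overline U_{ef}=\overline\pi\bigl((a\rtimes e)(b\rtimes f)\bigr),
\]
and the involution identity is checked similarly by taking adjoints and invoking $\overline U_e^*=\overline U_e$. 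Continuity in the $\ell^1$-norm together with the universal property of the enveloping $C^*$-algebra $A\rtimes E$ then extends $\overline\pi$ uniquely to the full crossed product.

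For (II), the required intertwining $\overline U_s\,\overline\pi(a\rtimes e)=\overline\pi\bigl(\beta_s(a\rtimes e)\bigr)\,\overline U_s$ unwinds, via the defining formulas, to
\[
\overline U_s(\pi(a)\otimes 1)\overline U_e=(\pi(s(a))\otimes 1)\overline U_{ses^*}\overline U_s,
\]
which separates into the $S$-equivariance of $\pi$ on $\cale$ (absorbing $U_s$ past $\pi(a)$) and the semigroup relation $\overline U_s\overline U_e=\overline U_{se}=\overline U_{ses^*}\overline U_s$ in the $B\rtimes E$-factor. For (III), the remark following Definition \ref{definitionCompatibleHilbert} reduces compatibility to the identity $\overline\pi(\beta_s(x))=\overline U_s\,\overline\pi(x)\,\overline U_s^*$; this follows from (II) by right-multiplying with $\overline U_s^*$ and observing that $\beta_s(A\rtimes E)\subseteq(A\rtimes E)_{ses^*}$, so that the ambient projection $\overline U_{ss^*}=\overline U_s\overline U_s^*$ acts as the identity on the relevant image.

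The main obstacle will be the bookkeeping in step (I): the tensor expression $(\pi(a)\otimes 1)\overline U_e$ is engineered precisely so that the possibly non-compatible $S$-equivariance of $\pi$, together with the projections $\overline U_e$, reassembles into the convolution on $A\rtimes E$, and making this rigorous requires careful use of $a\in A_e$ (so that $\pi(a)=\pi(e(a))$ is absorbed on the support of $\overline U_e$) combined with commutativity of $E$. Step (IV) is then immediate: if the input $(\pi,\cale,U)$ is compatible, then $\pi(e(a))=U_e\pi(a)U_e$, and since the action $\beta$ on $B\rtimes E$ is automatically compatible by Proposition \ref{lemmaExpansionFunctCstar}, the diagonal action $\overline U=U\otimes\beta$ on $\cale\rtimes E$ inherits compatibility from both factors.
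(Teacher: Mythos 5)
Your steps (I)--(III) are correct and, at bottom, follow the same route as the paper: the paper's proof consists of a single elementary-tensor computation establishing $\overline \pi(s(a \rtimes e)) = \overline U_s \overline \pi(a \rtimes e)\overline U_{s^*}$, which is your (II) and (III) combined. Your derivation of (III) from the one-sided relation (II) by right-multiplying with $\overline U_{s^*}$ and absorbing $\overline U_{s s^*}$ into $\overline U_{s e s^*}$ is a harmless repackaging of the same computation, and your (I) --- multiplicativity and $*$-preservation of $\overline \pi$ on $\ell^1(E,A)$ using $U_e \pi(b) = \pi(e(b)) U_e$ and $\overline U_e \overline U_f = \overline U_{ef}$, followed by extension through the enveloping $C^*$-algebra --- supplies a verification that the paper leaves implicit. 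That part is fine.

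The genuine gap is step (IV). The last claim of the lemma is not about $\overline \pi$ (which you have already shown to be compatible whether or not $\cale$ is); what remains is to check that the \emph{right} Hilbert-module structure of $\cale \rtimes E = \cale \otimes_B (B \rtimes E)$ over $B \rtimes E$ satisfies Definition \ref{definitionCompatibleHilbert}, i.e. $\overline U_k(\zeta)\, y = \zeta\, k(y)$ for $k \in E$. On elementary tensors, after using compatibility of $\beta$ on $B \rtimes E$ to move $k$ onto the second leg, one is left with having to identify $U_k(\xi) \otimes w$ with $\xi \otimes w$ for $w$ in the range of $\beta_k$, and the only way to do this across the $B$-balanced tensor product is via the right-module compatibility $U_k(\xi)\, b = \xi\, k(b)$ of $\cale$ over $B$. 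This is exactly the computation the paper performs, and it is the only point in the whole lemma where the hypothesis that $\cale$ is compatible is used. Your justification instead invokes the left-action identity $\pi(e(a)) = U_e \pi(a) U_e$ together with compatibility of $\beta$ from Proposition \ref{lemmaExpansionFunctCstar} and declares the rest ``immediate''; as written, that argument never touches the right-module structure and would apply verbatim to a non-compatible $\cale$, for which the conclusion need not hold. The repair is a two-line computation, but it should be stated, since it is where the compatibility hypothesis actually does its work.
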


\begin{proof}
We are going to show that $\overline \pi$ is compatibly equivariant and demonstrate the formula $\overline \pi(s(a \rtimes e)) = \overline U_s \pi(a \rtimes e) \overline U_s^*$.
Let $e,f \in E, a \in A_e, b \in B_f, \xi \in \cale$ and $s \in S$. We have
\begin{eqnarray*}
\overline \pi(s(a \rtimes e))(\xi \otimes (b \rtimes f)) &=& \pi(s(a) \rtimes s e s^*) (\xi \otimes (b \rtimes f))\\
&=& \pi(s(a)) U_{s e s^*}(\xi) \otimes (s e s^*(b) \rtimes s e s^* f),\\
\overline U_s \overline \pi(a \rtimes e) \overline U_{s^*} (\xi \otimes (b \rtimes f)) &=&
\overline U_s \pi(a \rtimes e) \big ( U_{s^*}(\xi) \otimes (s^*(b) \rtimes s^* f s) \big)\\
&=& U_s \pi(a) U_{e s^*}(\xi) \otimes \big( s e s^*(b) \rtimes s e s^* f s s^* \big)\\
&=& \pi(s(a)) U_{s e s^*}(\xi) \otimes (s e s^*(b) \rtimes s e s^* f).
\end{eqnarray*}
So $\epsilon_{\rm H}$ is a well defined functor.
If $\cale$ is compatible then the tensor product $\cale \otimes_B (B \rtimes E)$ is a compatible Hilbert $(A \rtimes E,B \rtimes E)$-bimodule
because $\overline \pi$ is compatible and the module multiplication is compatible as we have
\begin{eqnarray*}
\xi \otimes (b \rtimes e) \cdot k(c \rtimes f) &=& \xi \otimes k (b \rtimes e) \cdot (c \rtimes f)\\
 &=& U_k(\xi) \otimes k(b \times e) \cdot (c \times f)
\end{eqnarray*}
for all $\xi \in \cale, e,f,k \in E, b \in B_e$ and $c \in B_f$.
\end{proof}


\begin{lemma}   \label{lemmaExpasionTensorproducts}
Let $\cale$ and $\calf$ be compatible $S$-Hilbert bimodules.
Then there is a canonical isomorphism in the category of compatible $S$-Hilbert bimodules
\begin{eqnarray*}
(\cale \rtimes E) \otimes_{B \rtimes E} (\calf \rtimes E)  & \cong &
(\cale \otimes_B \calf) \rtimes E
\end{eqnarray*}
\end{lemma}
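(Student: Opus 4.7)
The plan is to exhibit an explicit isomorphism $\Phi$ and check that it carries all the required structure. On elementary tensors, define
\[
\Phi\bigl((\xi \otimes (b \rtimes e)) \otimes_{B \rtimes E} (\eta \otimes (c \rtimes f))\bigr) := (\xi \otimes_B \pi(b) U_e(\eta)) \otimes_C (e(c) \rtimes e f).
\]
Conceptually this is the composition of three standard isomorphisms: reassociating the left tensor product, cancelling $(B \rtimes E) \otimes_{B \rtimes E} M \cong M$ applied with $M = \calf \rtimes E$ (whose left $B \rtimes E$-action is $\overline \pi$ from Lemma \ref{lemmaExpansionBimodule}), and reassociating again to pull the $(C \rtimes E)$-factor outside.

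First I would verify that $\Phi$ is well defined, i.e., that it respects the $B$-balance in the first factor, the $C$-balance in the second, and the crucial balance over $B \rtimes E$ between the two. The $B \rtimes E$-balance reduces to the identities $U_e \pi(b') = \pi(e(b')) U_e$ and $U_e U_{e'} = U_{e e'}$, both immediate consequences of $\pi$ being compatible. Next, a direct computation shows $\Phi$ preserves the $(C \rtimes E)$-valued inner product: the compatibility relation $\langle U_e(\eta), \cdot \rangle = e \langle \eta, \cdot \rangle$, together with $U_e^* = U_e$, matches the corresponding multiplications on the target side.

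Once $\Phi$ is a well-defined isometry, I would obtain density of its image as follows. Given $(\xi \otimes_B \eta) \otimes (c \rtimes f)$ in the target, use upward-directedness of $E$ to pick $f' \in E$ with $f \le f'$ and $c \in C_{f'}$, together with an approximate unit $(u_i)$ of $B_{f'}$; then
\[
\Phi\bigl((\xi \otimes (u_i \rtimes f')) \otimes (\eta \otimes (c \rtimes f))\bigr) = (\xi \otimes_B \pi(u_i) U_{f'}(\eta)) \otimes (f'(c) \rtimes f' f)
\]
converges to $(\xi \otimes_B \eta) \otimes (c \rtimes f)$, because $\pi(u_i) \to 1$ strictly on $B_{f'}$ and the $U_{f'}$ can be absorbed using compatibility of $\calf$ and the balance against $(c \rtimes f)$. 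Thus $\Phi$ extends to an isomorphism of Hilbert $(C \rtimes E)$-modules. Finally I would verify $S$-equivariance and left $A \rtimes E$-linearity: on elementary tensors, applying the diagonal $s$-action and then $\Phi$ agrees with applying $\Phi$ and then $s$, once one uses the identity $s e s^* s = s e$ in $S$ (matching the projection labels in $E$) together with $U_s \pi(b) U_s^* = \pi(s(b))$ (matching the module factors). The left $A \rtimes E$-action check is analogous via $\overline \pi(a \rtimes e') = (\pi(a) \otimes 1) \overline U_{e'}$, and compatibility of both bimodules is already granted by Lemma \ref{lemmaExpansionBimodule}.

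The hard part will be the density step: the absence of units in $B$ and $C$ and the absence of a top element in $E$ force a careful use of approximate units in each $B_{f'}$ combined with upward-directedness of $E$; everything else is routine bookkeeping of the compatibility axioms, essentially the standard associativity of internal Hilbert module tensor products dressed up with the extra $S$- and $E$-structure.
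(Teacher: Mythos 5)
Your proposal is correct and takes essentially the same route as the paper: the paper's proof simply writes down the explicit map $(\xi \otimes b \rtimes e) \otimes (\eta \otimes c \rtimes f) \mapsto (\xi b \otimes V_{ef}(\eta)) \otimes ef(c) \rtimes ef$ and leaves the verification to the reader, and your formula coincides with this one once the balancing relations over $B$ and over $C \rtimes E$ are applied. The extra detail you supply (well-definedness, isometry, density via approximate units of $B_{f'}$, equivariance) is exactly the ``straightforward detailed verification'' the paper omits.
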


\begin{proof}
Let $\cale$ and $\calf$ modules over $B$ and $C$, respectively,
with $S$-actions denoted by $U$ and $V$, respectively.
$B \rtimes E$ acts on $\calf \rtimes E$ according
to Lemma \ref{lemmaExpansionBimodule}. 
The
transformation is given by
\begin{eqnarray*}
&& (\xi \otimes b \rtimes e) \otimes (\eta \otimes c \rtimes f)   \longmapsto
\big (\xi b \otimes V_{ef}(\eta) \big ) \otimes ef(c) \rtimes e f
\end{eqnarray*}
for $e,f \in E, \xi \in \cale, \eta \in \calf,  b \in B_e$ and $c \in C_f$.
We leave the straightforward detailed verification to the reader.
%
%
%
\end{proof}



{In \cite{burgiDescent}, a descent homomorphism
$j^S: KK^S(A,B) \longrightarrow KK (A \rtimes S,B \rtimes S)$
is defined by mapping a cycle $(\pi,\cale, T)$ to the cycle
$(\tilde \pi, \cale \otimes_B (B \rtimes S), T \otimes 1)$, where
$\tilde \pi (a \rtimes s) := (\pi(a) \otimes 1) (U_s \otimes V_s)$,
and $V_s(b \rtimes t) := s(b) \rtimes s t$ for $s, t \in S, a \in A_s$ and $b \in B_t$. }
By a slight abuse of notation let us denote
the composition of the canonical map $\widehat{KK^S}(A,B) \longrightarrow KK^S(A,B)$ (induced by the identity on cycles)
with the decent homomorphism $j^S$ also by $j^S$, that is, we have a descent homomorphism
$j^S: \widehat{KK^S}(A,B) \longrightarrow KK(A \rtimes S,B \rtimes S)$.

%


\begin{theorem}   \label{theoremExpansionHomomorphismHat}

Suppose that $S$ is the Cantor inverse semigroup of Paterson's groupoid $\calg_S$.
Let $A$ and $B$ be compatible $S$-Hilbert $C^*$-algebras.

(a) Then there exists an expansion group homomorphism
$$\epsilon : \widehat {KK^S} (A,B) \longrightarrow \widehat{KK^{S}}(A \rtimes E,B \rtimes E)$$
defined by $\epsilon[ \pi,\cale,T ]
=[\epsilon_{\rm H}(\pi,\cale),T \otimes 1] =[\overline \pi, \cale
\rtimes E, T \otimes 1]$ for $(\pi,\cale,T) \in \E^S(A,B)$.

(b) If $S$ has a unit then the expansion homomorphism respects
the intersection product, that is,
$\epsilon( x \otimes_B y) = \epsilon(x) \otimes_{B \rtimes E} \epsilon(y)$
if the product $x \otimes_B y$ is defined ($x \in \widehat{KK^S}(A,B), y \in
\widehat{KK^S}(B,C)$).

(c) $\epsilon$ respects functoriality in $A$ and $B$, i.e.
equivariant homomorphisms $f:A' \rightarrow A$ and $g : B
\rightarrow B'$ enjoy $(f \otimes 1)^* \epsilon = \epsilon f^*$
and $(g \otimes 1)_* \epsilon = \epsilon g_*$.

(d) $\epsilon$ intertwines Sieben's crossed product decent homomorphism and Khoshkam--Skandalis' crossed product descent homomorphism in the following sense: There
is a commuting diagram
$$
\begin{xy}
\xymatrix {
\widehat{KK^S}(A,B) \ar[d]_{j^S}  \ar[rr]^\epsilon   & &  \widehat{KK^{S}} (A \rtimes E, B \rtimes E)  \ar[d]^{\widehat{j^S}}\\
KK(A \rtimes S,B \rtimes  S)  \ar[rr]^\nu  & &  KK ( (A \rtimes E) \widehat{\rtimes } S, (B \rtimes E ) \widehat{\rtimes} S).
}
\end{xy}
$$
%
The bottom isomorphism $\nu$ is induced by $\gamma$ of Theorem
\ref{theoremKoshkamSkandalis}.
\end{theorem}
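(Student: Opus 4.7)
The plan is to verify the four claims essentially on the level of cycles, using the structural results in Lemmas \ref{lemmaExpansionBimodule} and \ref{lemmaExpasionTensorproducts} together with functoriality of the internal tensor product on $S$-equivariant Hilbert modules.

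\textbf{Part (a).} First I would check that $(\overline\pi,\cale\rtimes E,T\otimes 1)$ lies in $\widehat{\E^S}(A\rtimes E, B\rtimes E)$. The compatibility of $\overline\pi$ (hence of the bimodule $\cale\rtimes E$) is exactly what Lemma \ref{lemmaExpansionBimodule} delivers, so only the Kasparov conditions remain. Since $T\otimes 1$ is the standard ampliation of $T\in\call(\cale)$ along the right tensor factor $B\rtimes E$, compactness of $[\overline\pi(a\rtimes e),T\otimes 1]$, $\overline\pi(a\rtimes e)((T\otimes 1)^2-1)$ and $\overline\pi(a\rtimes e)((T\otimes 1)^*-(T\otimes 1))$ reduces, using $\overline\pi(a\rtimes e)=(\pi(a)\otimes 1)\overline U_e$, to compactness of $\pi(a)[T,\cdot]$ etc.\ on $\cale$, which holds because $(\pi,\cale,T)\in\widehat{\E^S}(A,B)$. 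For the compact-perturbation part of the $S$-equivariance one uses $\overline U_s(T\otimes 1)\overline U_s^*=U_sTU_s^*\otimes 1$ and that $\overline\pi(a\rtimes e)(U_sTU_s^*-T)\otimes 1$ is compact on $\cale\rtimes E$ because its counterpart on $\cale$ already is. Well-definedness on $\widehat{KK^S}$ follows by applying the same construction to a cycle in $\widehat{\E^S}(A,B[0,1])$; since $B[0,1]\rtimes E\cong (B\rtimes E)[0,1]$ canonically and the expansion commutes with evaluation at endpoints, homotopies pass to homotopies.

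\textbf{Part (b).} The idea is to transport the Kasparov product under the canonical isomorphism of Lemma \ref{lemmaExpasionTensorproducts}. Let $(\pi_1,\cale,T_1)$ and $(\pi_2,\calf,T_2)$ be representatives of $x$ and $y$. Under the identification
\[
(\cale\rtimes E)\otimes_{B\rtimes E}(\calf\rtimes E)\;\cong\;(\cale\otimes_B\calf)\rtimes E,
\]
the cycle $\epsilon(x)\otimes_{B\rtimes E}\epsilon(y)$ is represented by $(\overline{\pi_1\otimes 1},(\cale\otimes_B\calf)\rtimes E,T)$ for some connection $T$ of $T_2\otimes 1$ along $T_1\otimes 1$. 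A direct check shows this isomorphism matches a connection of $T_2$ (on $\calf$) along $T_1$ ampliated by the identity of $B\rtimes E$, i.e.\ it is a representative of $\epsilon(x\otimes_B y)$. Uniqueness of the Kasparov product (Kasparov's technical theorem) then gives equality in $\widehat{KK^S}$. The hypothesis that $S$ has a unit is used here only to make sure the multiplier $\overline U_e$ composes correctly with $1\in S$ when writing $\overline{\pi_1\otimes 1}$.

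\textbf{Part (c).} This is immediate: $\epsilon_{C^*}(f)=f\otimes 1$ by Proposition \ref{lemmaExpansionFunctCstar}, and tensoring cycles by $(B\rtimes E,\overline U)$ is natural in both variables, so $(f\otimes 1)^*\epsilon=\epsilon f^*$ and $(g\otimes 1)_*\epsilon=\epsilon g_*$ hold already on the level of cycles.

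\textbf{Part (d).} On a cycle $(\pi,\cale,T)\in\widehat{\E^S}(A,B)$ one has
\[
\widehat{j^S}\epsilon[\pi,\cale,T]=\bigl[\widehat{\overline\pi},(\cale\rtimes E)\otimes_{B\rtimes E}((B\rtimes E)\widehat\rtimes S),T\otimes 1\bigr]
\]
by formula \eqref{hatJS}, while
\[
j^S[\pi,\cale,T]=\bigl[\tilde\pi,\cale\otimes_B(B\rtimes S),T\otimes 1\bigr]
\]
by the recalled formula for $j^S$. The isomorphism $\gamma\colon (B\rtimes E)\widehat\rtimes S\to B\rtimes S$ of Theorem \ref{theoremKoshkamSkandalis}, applied to the right tensor factor, combined with the canonical identification $\cale\otimes_B(B\rtimes S)\cong(\cale\rtimes E)\otimes_{B\rtimes E}((B\rtimes E)\widehat\rtimes S)$ obtained from Lemma \ref{lemmaExpasionTensorproducts} (with $\calf=B\widehat\rtimes S$, up to the $\gamma$-identification) intertwines $\tilde\pi$ with $\widehat{\overline\pi}$: this is a direct calculation using $\tilde\pi(a\rtimes s)(\xi\otimes(b\rtimes t))=\pi(a)U_s(\xi)\otimes s(b)\rtimes st$ and the formulas \eqref{tildePiJS}, \eqref{cstaraction}.

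\textbf{Main obstacle.} The technically delicate step is (b): one must check that the identification in Lemma \ref{lemmaExpasionTensorproducts} carries a connection for $(T_1\otimes 1,T_2\otimes 1)$ to an ampliation of a connection for $(T_1,T_2)$, so that Kasparov's uniqueness theorem can be invoked. Part (d), although formally a cycle-level identification, also requires care in checking that the two natural isomorphisms (Lemma \ref{lemmaExpasionTensorproducts} and the Quigg--Sieben/Khoshkam--Skandalis isomorphisms $\psi,\gamma$) really do intertwine the various $S$-actions on the bimodule factors.
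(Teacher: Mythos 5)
Your overall strategy coincides with the paper's: part (a) is reduced to the non-equivariant descent situation plus a separate check of the $S$-action on the cycle; part (b) rests on the isomorphism of Lemma \ref{lemmaExpasionTensorproducts} together with a connection/uniqueness argument (the paper outsources the non-equivariant statement to the descent theorem of \cite{burgiDescent} and then upgrades it to the compatible setting); part (c) is naturality; and part (d) is the cycle-level identification via $\gamma$ and an explicit verification that $\tilde \pi = \hat{\overline \pi}$, exactly as in the paper.

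There is, however, one step in (a) that you treat as automatic but which is the actual content of the argument. You claim that compactness of $[\overline\pi(a\rtimes e),T\otimes 1]$ and of $\overline\pi(a\rtimes e)\big(\overline U_s(T\otimes 1)\overline U_{s^*}-(T\otimes 1)\overline U_{ss^*}\big)$ ``reduces'' to the corresponding compactness statements on $\cale$. But for an internal tensor product $\cale\otimes_B\calf$ one does \emph{not} in general have $\calk(\cale)\otimes 1\subseteq\calk(\cale\otimes_B\calf)$; this inclusion holds only when the left action of $B$ on $\calf$ is by compact operators, and here $B$ acts on $B\rtimes E$ merely by multipliers. What saves the argument is the localizing factor $\overline U_e$ coming from $\overline\pi(a\rtimes e)=(\pi(a)\otimes 1)\overline U_e$: the paper invokes the inclusion $\overline U_e(\calk(\cale)\otimes 1)\subseteq\calk(\cale\otimes_B(B\rtimes E))$ established in the proof of the descent theorem of \cite{burgiDescent}, and this is precisely the ingredient missing from your reduction. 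Relatedly, the identity $\overline U_s(T\otimes 1)\overline U_s^*=U_sTU_s^*\otimes 1$ is not literally correct, since $\overline U_s\overline U_s^*=\overline U_{ss^*}\neq 1$ ($U_s$ is only a partial isometry); the correct computation gives $\overline U_s(T\otimes 1)\overline U_{s^*}=U_sTU_{s^*}\otimes\beta_{ss^*}$, which is still sufficient for the cycle condition once the compactness inclusion above is in place.
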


\begin{proof}
(a) As remarked above, we have a descent homomorphism $j^E:\widehat{KK^S}(A,B)
\longrightarrow KK(A \rtimes E,B \rtimes E)$.
Ignoring the $S$-action in the image of $\epsilon$, $\epsilon$ is defined on the level of cycles like the descent
homomorphism $j^E$.
So the cycle of $\epsilon(x)$ is in $\E(A \rtimes E,B \rtimes E)$.
That is why we need only examine the $S$-action.
In view of Lemma \ref{lemmaExpansionBimodule} it remains to examine the elements $U_s T U_s^* - U_s U_s^* T$
and $[U_s U_s^*, T]$ appearing in the definition of an $S$-equivariant Kasparov cycle in \cite[Def. 3.6]{burgiDescent}. 
More precisely, we have to check that $\overline
\pi(a) [\overline U_e, T \otimes 1]$ and $\overline \pi(a) (\overline U_s (T
\otimes 1) \overline U_{s^*} - (T \otimes 1) \overline U_{s s^*})$ are
compact operators in $\calk(\cale \otimes_B (B \rtimes E) )$ for all $a \in A,e \in E$ and $s \in S$.
This follows by a small computation
from $\overline U_e (\calk(\cale) \otimes 1) \subseteq \calk(\cale
\otimes_B (B \rtimes E))$ ($\forall e \in E$) which appears in the proof of \cite[Theorem 13.4]{burgiDescent}.

(b)
The claim is correct if we regard $\epsilon$ only as a descent homomorphism $\epsilon: KK^S(A,B) \longrightarrow KK(A \rtimes E,B \rtimes B)$
without the $S$-action in the image of $\epsilon$ by \cite[Theorem 13.4]{burgiDescent}.
The argument in \cite[Theorem 13.4]{burgiDescent} uses the isomorphism involving the bimodule $\cale_{12} = \cale_1 \otimes_B \cale_2$ stated in Lemma \ref{lemmaExpasionTensorproducts}.
Hence by extending the argument in \cite[Theorem 13.4]{burgiDescent}
by the facts that this lemma is also on isomorphism for the $S$-action, and that $\cale_{12}$ is also
a compatible bimodule when $\cale_1$ and $\cale_2$ are compatible, one gets that also $\epsilon$ respects the Kasparov product.

(c)
This can be checked by using Lemma
\ref{lemmaExpasionTensorproducts} and we ask the reader to elaborate the details.
(d)
We have
\begin{eqnarray*}
{j^S} [\pi,\cale,T] &=& [\tilde \pi, \cale \otimes_B (B \rtimes S), T \otimes 1],\\
\widehat{j^S} \epsilon [\pi,\cale,T] &=& \big[\hat{\overline \pi}, \big ( \cale \otimes_B (B \rtimes E) \big )
\otimes_{B \rtimes E} \big((B \rtimes E) \widehat \rtimes S\big), T \otimes 1 \otimes 1 \big]\\
 &=& \big [\hat{\overline \pi}, \cale \otimes_B \big((B \rtimes E) \widehat \rtimes S\big), T \otimes 1 \big]\\
 &=& [\hat{\overline \pi}, \cale \otimes_B (B \rtimes S), T \otimes 1 ],
\end{eqnarray*}
where we have used $\gamma$ of Theorem \ref{theoremKoshkamSkandalis}.
(Sloppily we identify the appearing $C^*$-algebras under $\gamma$ and do not spell out this isomorphism.)
We are going to show that $\tilde \pi = \hat{\overline \pi}$.
Let $e,f \in E, a\in A_e, b\in B_f^+$ be positive and $s,t \in S$. We have
\begin{eqnarray}
\tilde \pi \big( \gamma ((a \rtimes e) \widehat \rtimes s ) \big)\big(\xi \otimes
\gamma  ((b \rtimes f) \widehat \rtimes t ) \big) &=&
\tilde \pi (a \rtimes e s) (\xi \otimes
(b \rtimes f t)) \nonumber \\
&=& \pi(a) U_{es}(\xi ) \otimes (es(b) \rtimes es ft).  \label{eq400}
\end{eqnarray}
On the other hand we have
\begin{eqnarray}
&& \hat{\overline \pi} \big( (a \rtimes e) \widehat \rtimes s \big)
\big(\xi \otimes (\sqrt{b} \rtimes f) \otimes ( (\sqrt{b} \rtimes f) \widehat \rtimes t) \big)  \nonumber \\
&=& \overline \pi (a \rtimes e) \big (U_s (\xi) \otimes
(s(\sqrt{b}) \rtimes s fs^*) \big ) \otimes \big( (s(\sqrt{b}) \rtimes s f s^*) \widehat \rtimes s t) \big)  \nonumber \\
&=& \big (\pi (a) U_e U_s (\xi) \otimes
(e s(\sqrt{b}) \rtimes e s fs^*) \big ) \otimes \big( (s(\sqrt{b}) \rtimes s f s^*) \widehat \rtimes s t) \big)  \nonumber \\
&\cong& \pi (a) U_{e s} (\xi) \otimes
(e s(\sqrt{b}) \cdot e s fs^* s(\sqrt{b}) \rtimes e s fs^* s f s^*  s t) \nonumber \\
&=& \pi (a) U_{e s} (\xi) \otimes  
(e s(b) \rtimes e s f t).  \label{eq401}
\end{eqnarray}
Since (\ref{eq400}) and (\ref{eq401}) are equal, we get $\tilde \pi = \hat{\overline \pi}$,
and so $j^S = \widehat{j^S} \epsilon$.
\end{proof}

As the proofs of (a)-(c) in the last theorem do not involve the embedding of $S$ in a groupoid at all, 
for every inverse semigroup $S$
there exists also an expansion homomorphism $\epsilon':KK^S(A,B) \longrightarrow KK^S(A\rtimes E,B\rtimes B)$
similarly as in the last theorem (without hat and $\calg_S$ need also not be Hausdorff).
This homomorphism respects the Kasparov product and functoriality.

%


\begin{definition}    \label{defReducedDescent}
{\rm
One may define a descent homomorphism $j^S_r:\widehat{KK^S} (A,B) \longrightarrow KK( A  \rtimes_{r} S, B  \rtimes_{r} S)$
for Khoshkam and Skandalis' reduced crossed product \cite{1061.46047}
by setting $j^S_r := \tau j_r^{\calg_S} \rho^{-1} \epsilon$. Here, $\tau$ denotes the isomorphism induced by $\mu$ of
Theorem \ref{theoremKoshkamSkandalis}.
}
\end{definition}

\bibliographystyle{plain}
\bibliography{references}

\end{document}